\numberwithin{equation}{section}
\newtheorem{theorem}{Theorem}
\newtheorem{lemma}{Lemma}
\newtheorem{proposition}{Proposition}
\newtheorem{corollary}{Corollary}
\theoremstyle{definition}
\newtheorem{definition}{Definition}[section]
\theoremstyle{remark}
\newtheorem{remark}{Remark}[section]
\DeclareMathOperator{\linspan}{span}
\DeclareMathOperator{\dom}{dom}
\newcommand{\wT}{\hyperref[D:generalfamily]{w_\mathcal{T}}}
\newcommand{\WTC}{\hyperref[D:generalfamily]{W_\mathcal{T}^C}}
\newcommand{\vz}{\zeta}
\newcommand{\vx}[1]{\hyperref[D:family]{x^\zeta_{#1}}}
\newcommand{\vg}[1]{\hyperref[D:family]{g^\zeta_{#1}}}
\newcommand{\vf}[1]{\hyperref[D:family]{f^\zeta_{#1}}}
\newcommand{\vW}{\hyperref[D:family]{W_\zeta}}
\newcommand{\vw}{\hyperref[D:family]{w_\zeta}}
\newcommand{\sz}{\zeta^*}
\newcommand{\sW}{\hyperref[D:family]{W_{\sz}}}
\newcommand{\sx}[1]{\hyperref[D:family]{x^{\sz}_{#1}}}
\newcommand{\sg}[1]{\hyperref[D:family]{g^{\sz}_{#1}}}
\newcommand{\sff}[1]{\hyperref[D:family]{f^{\sz}_{#1}}}
\newcommand{\ee}[1]{\hyperref[P:notations]{\mathbf{u}_{#1}}}
\newcommand{\oracle}{\mathcal{O}}
\newcommand{\algorithm}{A}
\newcommand{\oraclea}{\mathcal{O}_{w_\algorithm}}
\newcommand{\risk}[2]{\mathscr{R}_{#1}{(#2)}}
\newcommand{\finstances}{\hyperref[P:smooth]{\mathcal{F}_{L, R}(\real^d)}}
\newcommand{\nonsmoothclass}{\hyperref[P:nonsmooth]{\mathcal{C}_{M, R}(\real^d)}}
\newcommand{\ax}{\xi}
\newcommand{\tmpvec}{v}
\newcommand{\real}{\mathbb{R}}
\newcommand{\realkn}{\hyperref[P:realkn]{\real^{k,N+1}}}
\title{The exact information-based complexity of 
smooth convex minimization}
\author{Yoel Drori\footnote{Google Inc. (e-mail: \texttt{dyoel@google.com})}}
\date{\today}
\begin{document}
\maketitle

\begin{abstract}
We obtain a new lower bound on the information-based complexity of first-order minimization of smooth and convex functions.
We show that the bound matches the worst-case performance of the
recently introduced Optimized Gradient Method,
thereby establishing that 
the bound is tight and 
can be realized by an efficient algorithm.
The proof is based on a novel construction technique of smooth and convex functions.

\paragraph{Keywords} Convex optimization; Complexity; Rate of convergence; Information-based complexity
\end{abstract}

\section{Introduction}
The problem of smooth and convex minimization plays a key role in a various range of applications, including
signal and image processing, communications, machine learning, and many more.
% When moderately accurate solutions are sufficient,
Some of the most successful approaches for solving these problems are first-order methods,
i.e., algorithms that are
only allowed to gain information on the objective by evaluating its value and gradient.
The benefit of limiting the amount of accessible information
is that these algorithms generally involve very cheap and simple computational iterations,
making them suitable for tackling large scale problems.
This benefit, however, comes with a price:
first-order methods, in general, require considerable computational effort in order reach highly accurate solutions,
making them practical when only moderate accuracy is sufficient.

As the scale of modern problems grows and finding
efficient algorithms becomes increasingly important,
a natural question that arises, and will be the main focus of this paper,
is finding the level of accuracy that can be attained by first-order methods using a bounded computational effort.
Note that there is some difficulty in answering this question that originates from the fact that
the computational effort of a first-order method consists of two parts: 
the effort in choosing the points where the objective is to be evaluated (called the \emph{search points})
and the effort in calculating the objective value and gradient at these points.
Observing that the evaluation of the objective and its gradient often dominates the computational effort of the computation
and following the theory of information-based complexity introduced in~\cite{nemirovsky1992information},
we resolve this issue by measuring
the computational effort of an algorithm by the number of times it evaluated the objective and its gradient,
neglecting the effort required for choosing the search points.
% in order to reach the given accuracy level.

% pioneered by Nemirovsky and Yudin~\cite{nemirovsky1992information}, which

% In order to formally define this notion,
% we need to provide two ingredients: 
% a measure of the accuracy of an approximate solution and
% a class of problem instances to be solved by the method.

To put these concepts in more precise terms,
consider the following
unconstrained problem
\[
	(P) \quad f^* = \min_{x\in \real^d} f(x),
\]
where $f$ is a smooth and convex function.
A \emph{first-order} optimization method is an iterative algorithm that 
approximates the solution of $(P)$,
where it is only allowed to gain information on the objective $f$ via a first-order oracle, $\oracle_f$,
that is, a subroutine which given a
point in $\real^d$, returns the value of the objective and its gradient at that point.
In addition, since the problem of unconstrained minimization is invariant under translations, we also assume that the algorithm is provided with
a reference (or starting) point $x_0\in\dom(f)$ that is often assumed to be ``not too far'' from an optimal solution.
We call the pair  $(\oracle_f, x_0)$ a \emph{problem instance}, and 
for a first-order method $A$ we denote
the approximate solution generated by algorithm when applied on this problem instance by $A(\oracle_f, x_0)$%
\footnote{In order to simplify the presentation, we assume $A$ has a built-in stopping criterion that does not depend on external input.}.

Within the setting considered above, a commonly used criterion 
for measuring the accuracy of an approximate solution is
% where in the following, we restrict our attention to 
the \emph{absolute inaccuracy} criterion,
which quantifies the accuracy of an approximate solution $\xi$ for a problem instance $(\oracle_f, x_0)$ by the value of $f(\xi)-f^*$
(for alternative criteria see e.g.,~\cite[Section~3.3]{nemirovski1999optimization}).
Under this criterion, 
the \emph{efficiency estimate} of a first-order method $A$ over some given set of problem instances $\mathcal{I}$ is defined as the worst-case 
absolute inaccuracy of $A$, i.e.,
\[
	\varepsilon(A; \mathcal{I}):=\sup_{(\oracle_f,x_0)\in \mathcal{I}}f(A(\oracle_f,x_0))-f^*.
\]
We can now put the main concept addressed in this paper in formal terms:
denoting by $\mathcal{A}_N$ the set of all first-order methods that perform at most $N\in \mathbb{N}$ calls to their first-order oracle,
the \emph{minimax risk  function}~\cite{Guzman20151} associated with $\mathcal{I}$ is defined
as the infimal efficiency estimate that a first-order method can attain over $\mathcal{I}$ as a function of the computational effort $N$:
\[
	\risk{\mathcal{I}}{N} := \inf_{A\in\mathcal{A}_N} \varepsilon(A; \mathcal{I}).
\]

Note that the classical notion of \emph{information-based complexity} of the set $\mathcal{I}$ 
can be identified as the inverse to the risk function,
\[
	\mathscr{C}_{\mathcal{I}}(\varepsilon) := \min \{ N : \risk{\mathcal{I}}{N}\leq \varepsilon\},
\]
i.e., the minimal computational effort needed by a first-order method in order to reach a given worst-case accuracy level,
however,  in the following we express our results using the minimax risk function as
it proves to be more convenient.

Clearly, in order to establish an upper bound on the minimax risk of a class
it is sufficient to find 
an upper bound on the efficiency estimate of a single first-order method
(the main problem here being the identification of a good algorithm).
On the other hand, establishing lower bounds on the minimax risk
requires a more involved analysis, as the bound needs to hold for any first-order method.
Several approaches appear in the literature for establishing lower-bounds, including
resisting oracles~\cite{nemi-yudi-book83},
construction of a ``worst-case'' function~\cite{Guzman20151, nest-book-04},
and reduction to statistical problems~\cite{agarwal2012information, raginsky2011information, shapiro2005complexity}.

Note that existing works on information-based complexity
focus mainly on establishing order of magnitude bounds, 
where less attention is paid to absolute constants. Nevertheless,
the exact minimax risk was established for several important classes of problem, some of which are detailed below. 

% The investigation of information-based complexity of convex optimization was initiated by Nemirovsky and Yudin in the seminal work \cite{nemi-yudi-book83},
% where order of magnitude bounds were established for several important classes of problems, including smooth, nonsmooth, and strongly-convex problems.
% Additional classes of problem were later investigated \yoel{...}
% For introductory texts we refer the reader to~\cite{nemirovski1994efficient, nest-book-04},
% and for some more recent works see~\cite{agarwal2012information, Guzman20151, raginsky2011information}.

\paragraph{Exact minimax risk results}
Consider the problem of convex quadratic minimization:
\begin{align*}
	\mathcal{P}_{\rho, R}(\real^d) := \{ (\oracle_f, x_0) : & f(x)\equiv\frac{1}{2}x^T A x + b^T x + c, \text{ for }x\in\real^d,\ A\succeq 0,\  \|A\|\leq \rho,\\ & \|x_0-x_*\|\leq R\text{ for some } x_*\in X_*(f)\}.
\end{align*}
It was established by Nemirovsky in~\cite[\S2.3.B]{nemirovsky1992information} that for $d \geq 2N+3$
exact minimax risk associated with this class  is given by
\begin{equation}\label{E:lowercomplexityquadratic}
	\risk{\mathcal{P}_{\rho, R}(\real^d)}{N} = \frac{\rho R^2}{2(2N+1)^2}.
\end{equation}
Nemirovsky also shows in~\cite{nemirovsky1992information} that this bound is attained by the Tschebyshev Methods,
and in a later work, that this bound is attained by the well-known and efficient Conjugate Gradient method
(this is a somewhat forgotten result, see (5.4.22) in \cite{nemirovski1999optimization},
where unlike classical bounds on the Conjugate Gradient method, %such as the ones described in~\cite{nocedal2006numerical}, 
this bound does not depend on nontrivial spectral properties of $A$).
% The work~\cite{nemirovsky1992information} further establishes exact results on additional related problem classes, including problems with alternative accuracy measures.

Another fundamental class of problems for which the exact minimax risk result is known
is the class of non-smooth convex functions,
\phantomsection\label{P:nonsmooth}
\begin{equation*}
\begin{aligned}
	\mathcal{C}_{M, R}(\real^d) := \{ (\oracle_f, x_0) : &\text{ $f$ is a convex function in $C^{0,0}_M(\real^d)$},\\ &\|x_0-x_*\|\leq R\text{ for some } x_*\in X_*(f)\}.
\end{aligned}
\end{equation*}
For this class, the exact minimax risk was recently established in \cite{drori2014optimal},
where it was shown that for $d\geq 2N+1$\footnote{This bound can be improved to cover the case $d\geq N+1$, see Remark~\ref{R:nonsmoothex}.}
\begin{equation}\label{E:nonsmoothbound}
	\risk{\nonsmoothclass}{N} = \frac{M R}{\sqrt{N+1}}.
\end{equation}
Note that this bound is obtained by several efficient methods, including the subgradient method~\cite[\S3.2.3]{nest-book-04}
and also a family of methods recently studied in~\cite{drori2014optimal}, which are similar to Kelley's well-known Cutting-Plane Method.

In this paper we focus on the class of smooth convex functions with Lipschitz-continuous gradient:
\phantomsection\label{P:smooth}
\begin{align*}
	\finstances := \{ (\oracle_f, x_0) : &f \text{ is a convex function in $C^{1,1}_L(\real^d)$},\\ & \| x_* - x_0\|\leq R,\text{ for some } x_*\in X_*(f)\}.
\end{align*}
Since any convex quadratic function is also a smooth function with Lipschitz-continuous gradient
with constant $L=\|A\|$,
we have $\mathcal{P}_{L, R}(\real^d) \subset \finstances$ and thus we get that
the bound~\eqref{E:lowercomplexityquadratic} also forms a lower bound on the minimax risk of the more general class
of convex functions with Lipschitz-continuous gradients.
To the best of our knowledge, this bound is currently the best known lower bound on this class of problems.

% the recently introduced Optimized Gradient Method~\cite{drori2013performance, kim2015optimized}
% can be shown to generate a sequence $x_0,\dots,x_N$
% such that~\cite[Theorem~2]{kim2015optimized}:

Concerning an upper bound on $\risk{\finstances}{N}$,
a new method, called the  Optimized Gradient Method,  was recently introduced in \cite{drori2013performance, kim2015optimized}. 
It was shown (first numerically in~\cite{drori2013performance}, then analytically~\cite{kim2015optimized}) that
a sequence $x_0,\dots,x_N$  generated
by this method for some $N\in \mathbb{N}$, satisfies~\cite[Theorem~2]{kim2015optimized}:
\[
  f(x_N)-f(x_*) \leq \frac{L\|x_0-x_*\|^2}{2\theta_N^2}, \quad \forall x_* \in X_*(f),
\]
where
\begin{equation}\label{E:thetadef}
\begin{aligned}
	& \theta_0 = 1, \\
	& \theta_i = \frac{1+\sqrt{1 + 4 \theta_{i-1}^2}}{2}, \quad i=1,\dots,N-1, \\
	& \theta_N =\frac{1+\sqrt{1 + 8 \theta_{N-1}^2}}{2},
\end{aligned}
\end{equation}
which establishes the following upper bound on the minimax risk of smooth minimization:
\begin{equation}\label{E:upperboundonrisk}
	\risk{\finstances}{N} \leq \frac{LR^2}{2\theta_N^2}.
\end{equation}

Note that since $\sqrt{1 + 4 \theta_{i-1}^2} = 2\theta_{i-1} + o(1)$, then for $i<N$ we have
$\theta_i = i/2 + o(i)$ 
and
\[
     \theta_N = \frac{\sqrt{2}}{2} N + o(N),
\]
hence there exists a gap of about a factor of eight between the lower bound \eqref{E:lowercomplexityquadratic} and the upper bound~\eqref{E:upperboundonrisk}.
The main goal of this paper is to close this gap by showing that the bound~\eqref{E:upperboundonrisk} is in fact tight, 
i.e., the inequality \eqref{E:upperboundonrisk} can be turned into an equality.

The approach taken by this paper is motivated by the ``worst-case function'' proof technique introduced in \cite{nest-book-04}
and the construction of smooth and convex functions developed in \cite{Taylor2016}.

\paragraph{Overview of the paper}
The rest of this paper is organized as follows.
We begin, in Section~\ref{S:interpolation}, by 
introducing a novel construction of smooth and convex functions 
% that satisfy a set of requirements on their value and gradient at given points.
that satisfy a given set of requirements on their value and gradient.
We then use this construction, in Section~\ref{S:family},
to define a function
that possesses properties making it suitable 
for constructing lower bounds on the minimax risk function.
Building on these results, in Section~\ref{S:lower}, 
we establish the minimax risk associated with $\finstances$.
Finally, in Section~\ref{S:concluding} give some concluding remarks.

\paragraph{Notations}\phantomsection\label{P:notations}
We denote by $\ee{i}:=e_{i+1}$ the canonical unit  vectors
with a \emph{zero-based} index
(e.g., the canonical basis vectors for $\real^n$ are denoted by $\ee{0},\dots,\ee{n-1}$).
% We denote by $e_i$ the $i$th canonical unit  vector.
For a convex function $f$, we denote by $X_*(f)$
its set minimizers, which we assume to be nonempty.

% \subsection{(Delete?) The Optimized Gradient Method (OGM)}
% In the paper \cite{drori2013performance}, Drori and Teboulle develop a new approach for finding
% worst-case bounds on the performance of first-order methods.
% They then derive an optimization problem whose solution
% yields the best performing method according the new approach.
% Numerical experiments showed that the
% the convergence rate of  resulting method is about two times better than Nesterov's celebrated Fast Gradient Method (FGM) \cite{nest83},
% however, the algorithm had to be recomputed for any fixed number of steps
% and its implementation was inefficient. 

% Later, Kim and Fessler
% The paper \cite{kim2015optimized} 
% were able to find an analytical solution to the alluded optimization problem,
% and furthermore, showed two efficient implementations for the resulting method.
% The following rate of convergence \cite[Theorem~2]{kim2015optimized} was obtained:
% \begin{equation}\label{E:boundogm}
	% f(x_N)-f^* \leq \frac{L\|x_0-x_*\|^2}{2\theta_N^2} \leq \frac{L\|x_0-x_*\|^2}{(N+1)(N+1+\sqrt{2})},
% \end{equation}
% where $\theta_N$ is as in \eqref{E:thetadef}.
% This confirms the numerical observation made by Drori and Teboulle that the optimized method converges at a rate that is about two times faster than the FGM.

% More recently, Kim and Fessler were able to extend their analysis to include 
% and in addition, showed some encouraging numerical results concerning the performance of the method
% and its possible extension to the composite minimization case.

\section{A smooth convex interpolation scheme}\label{S:interpolation}
In this section, we describe a general construction of smooth and convex functions that 
% has a prescribed set of function values and gradients at some given points.
satisfy a set of first-order requirements.
This construction can be viewed as a generalized primal form of the interpolation scheme developed in \cite{Taylor2016}.

More precisely, given a finite index set $I$ and a set of triples $\mathcal{T}=\{(x_i, g_i, f_i)\}_{i\in I}$
with $x_i \in \real^d$, $g_i\in \real^d$, and $f_i\in \real$ for some $d\in \mathbb{N}$,
we proceed to define a convex function that has a Lipschitz-continuous gradient with constant $L> 0$
and, under certain natural conditions, 
is \emph{interpolating} through the set $\mathcal{T}$, i.e., for all $i\in I$
the value of the function at $x_i$ is $f_i$ and its gradient at this point is $g_i$.

\begin{definition}\label{D:generalfamily}
Let  $L>0$, let $\mathcal{T}$ be as defined above, and denote by
$\wT: \real^d\times \real^d \times \real^{I}\rightarrow \real$ the following convex quadratic function:
\begin{equation}
\begin{aligned}
	\wT(y,\nu,\alpha):= \frac{L}{2} \|y+\nu-\sum_{i\in I} \alpha_i  (x_i-\frac{1}{L}g_i) \|^2 +\sum_{i\in I} \alpha_i (f_i -\frac{1}{2L} \| g_i\|^2).
\end{aligned}	
\end{equation}
Then for any closed convex set $C\subset \mathbb{R}^d$ such that $0\in C$,
we define the \emph{primal interpolating function of $\mathcal{T}$ with kernel $C$},
$\WTC(y):\real^d\rightarrow \real$, by
\begin{equation}\label{E:defbigw}
\begin{aligned}
	\WTC(y):=\min_{\nu \in C,\ \alpha\in \Delta_{I}}\ \{& \wT(y,\nu,\alpha)\},
\end{aligned}	
\end{equation}
% where $\Delta_{|I|}$ denotes the $|I|$-dimensional unit simplex.
where $\Delta_I:=\{ \alpha \in \real^{I}: \sum_{i\in I} \alpha_i=1,\ \alpha_i\geq 0,\ \forall i\in I\}$ is an $|I|$-dimensional unit simplex.
\end{definition}

Note that the extra degree of freedom granted by the inclusion of the set $C$, although not necessary
for the purpose of finding an interpolating function, will be crucial for establishing the properties of the worst-case
function in the next section.

For later reference, we now state some immediate necessary and sufficient optimality conditions for the minimization problem $\WTC(y)$.
\begin{lemma}\label{L:firstorder}
Fix any $y\in\real^d$ and
suppose $(\nu^*, \alpha^*)$ is feasible for the convex optimization problem $\WTC(y)$. Then
$(\nu^*, \alpha^*)$ is  optimal for $\WTC(y)$ if and only if the following conditions hold:
\begin{equation}\label{E:nuoptcond}
	\nu^* = P_C(-y+\sum_{i\in I} \alpha^*_i  (x_i-\frac{1}{L}g_i)),
\end{equation}
and for any $j,k\in I$ such that $\alpha^*_j>0$
\begin{equation}\label{E:forderoptalpha}
\begin{aligned}
& -L \langle x_j-\frac{1}{L}g_j, y+\nu^\ast-\sum_{i\in I} \alpha_i^\ast  (x_i-\frac{1}{L}g_i)\rangle +(f_j -\frac{1}{2L} \| g_j\|^2) \\ 
&\qquad \leq  
-L \langle x_k-\frac{1}{L}g_k, y+\nu^\ast-\sum_{i\in I} \alpha_i ^\ast (x_i-\frac{1}{L}g_i)\rangle +(f_k -\frac{1}{2L} \| g_k\|^2).
\end{aligned}
\end{equation}
Here $P_C$ denotes the projection onto the set $C$.
\end{lemma}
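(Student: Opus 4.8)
The plan is to recognize that $\WTC(y)$ is a convex optimization problem — minimizing the convex quadratic $\wT(y,\nu,\alpha)$ over the convex feasible set $C\times\Delta_I$ — so by standard convex optimization theory (e.g. the projection-type first-order optimality condition, or equivalently the KKT conditions), a feasible point $(\nu^*,\alpha^*)$ is optimal if and only if the directional derivative of the objective is nonnegative in every feasible direction, i.e. $\langle\nabla_\nu \wT, \nu-\nu^*\rangle + \langle\nabla_\alpha \wT, \alpha-\alpha^*\rangle \geq 0$ for all $(\nu,\alpha)\in C\times\Delta_I$. Since the feasible set is a product, this separates into two independent conditions: one in the $\nu$ variable over $C$, and one in the $\alpha$ variable over $\Delta_I$. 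So I would handle the two blocks separately and then simply conjoin the results.

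For the $\nu$-block, I would compute $\nabla_\nu \wT(y,\nu,\alpha) = L\bigl(y+\nu-\sum_i\alpha_i(x_i-\tfrac1L g_i)\bigr)$. The condition $\langle \nabla_\nu\wT,\ \nu-\nu^*\rangle\geq0$ for all $\nu\in C$, holding at $\nu=\nu^*$, is precisely the variational characterization of the Euclidean projection: $\nu^*=P_C\bigl(\nu^*-\tfrac1L\nabla_\nu\wT\bigr) = P_C\bigl(-y+\sum_i\alpha^*_i(x_i-\tfrac1L g_i)\bigr)$, since the $\nu^*$ terms cancel. This gives \eqref{E:nuoptcond}. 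For the $\alpha$-block over the simplex, I would compute $\partial_{\alpha_j}\wT(y,\nu,\alpha) = -L\langle x_j-\tfrac1L g_j,\ y+\nu-\sum_i\alpha_i(x_i-\tfrac1L g_i)\rangle + (f_j-\tfrac1{2L}\|g_j\|^2)$. The optimality condition over a simplex is the familiar statement that the partial derivative is minimized (among all coordinates) on the support of $\alpha^*$: concretely, for any $j$ with $\alpha^*_j>0$ and any $k\in I$, $\partial_{\alpha_j}\wT \leq \partial_{\alpha_k}\wT$ — testing the feasible direction $\mathbf{e}_k-\mathbf{e}_j$ gives ``$\leq$'', and conversely any feasible direction is a nonnegative combination of such moves, which yields sufficiency. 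Written out, this is exactly \eqref{E:forderoptalpha}.

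The only mild subtlety — and the place I would be most careful — is the passage from ``directional derivative nonnegative in all feasible directions'' to the two clean conditions, particularly the ``only if'' direction on the simplex: one must verify that the single-coordinate-swap directions $\mathbf{e}_k-\mathbf{e}_j$ generate the feasible cone at $\alpha^*$, so that testing only these is enough, and that the product structure of $C\times\Delta_I$ genuinely decouples the two blocks (which it does, since feasible directions in the product are pairs of feasible directions in each factor, and one can vary one while holding the other fixed at its optimum). Since $\wT$ is a convex quadratic, the first-order condition is both necessary and sufficient globally, with no constraint qualification needed beyond the obvious one for the polyhedral/convex feasible set, so no further regularity argument is required. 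The computation of the two gradients $\nabla_\nu\wT$ and $\nabla_{\alpha}\wT$ is routine differentiation of the displayed quadratic and I would not belabor it.
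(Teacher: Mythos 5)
Your proposal is correct and follows essentially the same route as the paper, which simply cites the standard projection characterization and the first-order optimality condition over the simplex (referring to Bertsekas); you have merely spelled out the standard argument — the variational inequality over the product set, its decoupling into the two blocks, and the simplex support condition — in full detail.
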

\begin{proof}
Condition~\eqref{E:nuoptcond} follows directly from the definition of the projection function, and condition~\eqref{E:forderoptalpha}
is the first-order optimality condition for $\alpha$. See for example~\cite{bertsekas1999nonlinear}.
\end{proof}

The main property of $\WTC(y)$ is summarized by the following theorem.
\begin{theorem}\label{T:basicprop}
The function $\WTC(y)$ is convex and in $C^{1,1}_L$.
Furthermore, $\WTC(x_i)=f_i$ and $\nabla \WTC(x_i)=g_i$ for any $i\in I$ that satisfies
\begin{equation}\label{E:conic}
	\textstyle P_C(-\frac{1}{L}g_i) = 0,
\end{equation}
and 
\begin{equation}\label{E:Lipchitzxi}
	\frac{1}{2L} \| g_i- g_j\|^2 \leq f_j-f_i -\langle g_i, x_j-x_i\rangle, \quad \forall j \in I.
\end{equation}
\end{theorem}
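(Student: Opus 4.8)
The plan is to analyze the minimization problem defining $\WTC(y)$ directly, using standard tools from parametric convex optimization. First I would establish convexity and the $C^{1,1}_L$ property. The function $\wT(y,\nu,\alpha)$ is jointly convex in $(y,\nu,\alpha)$ (it is a convex quadratic: the only nonlinear term is $\tfrac{L}{2}\|y+\nu-\sum_i\alpha_i(x_i-\tfrac1Lg_i)\|^2$, a squared norm of an affine map, hence convex), so the partial minimization over the convex set $C\times\Delta_I$ preserves convexity in $y$. For the Lipschitz gradient bound, I would rewrite $\WTC$ as an infimal projection: completing the square in $\nu$ and $\alpha$, or more directly observing that $\WTC(y) = \min_{z\in K}\{\tfrac{L}{2}\|y-z\|^2 + \ell(z)\}$ where $K$ is the (convex) set of points $z = \sum_i\alpha_i(x_i-\tfrac1Lg_i) - \nu$ over feasible $(\nu,\alpha)$ and $\ell$ is the induced (convex, since it's a partial min of an affine function over the fibers — actually one needs the minimal affine value over each fiber, which is concave-looking, so care is needed here) value. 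A cleaner route: $\WTC$ is the Moreau-envelope-type object $y\mapsto \min_z\{\tfrac L2\|y-z\|^2 + h(z)\}$ for a suitable closed convex $h$; such functions are always $C^{1,1}_L$ with $\nabla\WTC(y) = L(y-z^*(y))$. I would verify that the relevant $h$ is indeed closed convex — this is the one genuinely delicate point in part one, and I expect it to require writing $h(z) = \min\{\sum_i\alpha_i(f_i-\tfrac1{2L}\|g_i\|^2) : \sum_i\alpha_i(x_i-\tfrac1Lg_i)-\nu = z,\ \nu\in C,\ \alpha\in\Delta_I\}$ and checking this is the value function of a parametric LP with convex constraint, hence convex in $z$.

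Next I would prove the interpolation claims. Fix $i\in I$ satisfying \eqref{E:conic} and \eqref{E:Lipchitzxi}, and evaluate at $y=x_i$. The natural candidate optimal solution is $\nu^* = -\tfrac1Lg_i$ and $\alpha^* = $ the $i$-th vertex of the simplex (i.e. $\alpha^*_i=1$, $\alpha^*_j=0$ for $j\neq i$). With this choice, $y+\nu^*-\sum_j\alpha^*_j(x_j-\tfrac1Lg_j) = x_i-\tfrac1Lg_i-(x_i-\tfrac1Lg_i)=0$, so $\wT(x_i,\nu^*,\alpha^*) = f_i-\tfrac1{2L}\|g_i\|^2 + \tfrac1{2L}\|g_i\|^2 \cdot$— wait, let me recompute: the second term is $\alpha^*_i(f_i-\tfrac1{2L}\|g_i\|^2) = f_i-\tfrac1{2L}\|g_i\|^2$, and we must also account for... actually the first term vanishes, so $\wT = f_i - \tfrac1{2L}\|g_i\|^2$. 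Hmm, that gives $\WTC(x_i)\leq f_i-\tfrac1{2L}\|g_i\|^2$, not $f_i$. So the candidate is wrong; I'd instead take $\nu^*=0$ (legitimate since $0\in C$), keep $\alpha^*_i=1$, giving residual vector $x_i - (x_i-\tfrac1Lg_i) = \tfrac1Lg_i$, so $\wT(x_i,0,\alpha^*) = \tfrac L2\|\tfrac1Lg_i\|^2 + f_i-\tfrac1{2L}\|g_i\|^2 = f_i$. Good — so the candidate optimal point is $(\nu^*,\alpha^*)=(0,\mathbf{1}_i)$.

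Then I would verify optimality of $(0,\mathbf{1}_i)$ at $y=x_i$ via Lemma~\ref{L:firstorder}. Condition \eqref{E:nuoptcond} reads $0 = P_C(-x_i + (x_i-\tfrac1Lg_i)) = P_C(-\tfrac1Lg_i)$, which is precisely hypothesis \eqref{E:conic}. Condition \eqref{E:forderoptalpha}, with $j=i$ the only index where $\alpha^*_j>0$ and the residual $y+\nu^*-\sum\alpha^*(x-\tfrac1Lg) = \tfrac1Lg_i$, becomes for each $k\in I$:
\[
-\langle x_i-\tfrac1Lg_i,\, g_i\rangle + f_i - \tfrac1{2L}\|g_i\|^2 \;\leq\; -\langle x_k-\tfrac1Lg_k,\, g_i\rangle + f_k - \tfrac1{2L}\|g_k\|^2,
\]
and rearranging (moving $\langle\tfrac1Lg_i,g_i\rangle$ and $\langle\tfrac1Lg_k,g_i\rangle$ terms) yields exactly $\tfrac1{2L}\|g_i-g_k\|^2 \leq f_k - f_i - \langle g_i, x_k-x_i\rangle$, which is hypothesis \eqref{E:Lipchitzxi}. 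Hence $(0,\mathbf1_i)$ is optimal, so $\WTC(x_i)=f_i$; and since $\nabla\WTC(y)=L(y-z^*(y))$ where $z^*(x_i)=\sum\alpha^*(x_j-\tfrac1Lg_j)-\nu^* = x_i-\tfrac1Lg_i$, we get $\nabla\WTC(x_i) = L(x_i-(x_i-\tfrac1Lg_i)) = g_i$, completing the proof.

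The main obstacle I anticipate is part one: rigorously identifying $\WTC$ with a Moreau-type envelope and confirming the inner value function $h$ is closed and convex, which legitimizes both the convexity/smoothness claim and the differentiation formula $\nabla\WTC(y)=L(y-z^*(y))$ used in the interpolation step. Everything after that is bookkeeping with Lemma~\ref{L:firstorder}.
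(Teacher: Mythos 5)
Your proposal is correct, and the interpolation part (candidate point $(\nu^*,\alpha^*)=(0,\mathbf{1}_i)$, verification via Lemma~\ref{L:firstorder}, with \eqref{E:nuoptcond} reducing to \eqref{E:conic} and \eqref{E:forderoptalpha} rearranging to \eqref{E:Lipchitzxi}, then evaluating value and gradient) is exactly the paper's argument; you even catch and fix your own false start with $\nu^*=-\tfrac1Lg_i$ (which, besides giving the wrong value, need not lie in $C$). Where you diverge is the $C^{1,1}_L$ part. You identify $\WTC$ as a Moreau envelope $y\mapsto\min_z\{\tfrac L2\|y-z\|^2+h(z)\}$ of an inner value function $h$, which buys you the smoothness and the gradient formula $\nabla\WTC(y)=L(y-z^*(y))$ in one stroke, but at the cost of the delicate point you correctly flag: one must check that $h$ is proper convex (convexity is fine, being a partial minimization of a linear objective over a convex set; properness holds since $0\in C$ and $\Delta_I\neq\emptyset$), and either that $h$ is closed or that the envelope of $h$ agrees with that of its closure, so that the standard Moreau regularity theorem applies. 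The paper sidesteps this entirely with a more elementary two-step argument: first, for any optimal $(\nu^{(0)},\alpha^{(0)})$ at $y_0$ one has $\partial\WTC(y_0)\subseteq\{\nabla_y\wT(y_0,\nu^{(0)},\alpha^{(0)})\}$, and nonemptiness of the subdifferential of a finite convex function forces equality, giving differentiability and the same gradient formula you use; second, a one-line chain of inequalities yields the quadratic upper bound $\WTC(y_2)\leq\WTC(y_1)+\langle\nabla\WTC(y_1),y_2-y_1\rangle+\tfrac L2\|y_1-y_2\|^2$, which together with convexity is equivalent to $L$-Lipschitz continuity of the gradient. Your route is conceptually tidier and places the construction in a recognizable framework; the paper's route requires no closedness verification and no external regularity theorem. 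Either way the statement follows, provided you carry out the closedness check (or replace it with the descent-lemma argument) rather than leaving it as an anticipated obstacle.
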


\begin{proof}
\emph{Convexity.}
This follows from a well-known property of the infimum operator. See e.g., \cite[Proposition~2.22]{rockafellar2009variational}.

\emph{Lipschitz-continuity.}
We first show that $\WTC$ is differentiable.
Let $y_0\in \real^d$ and suppose $(\nu^{(0)}, \alpha^{(0)})$ is an optimal solution to the optimization problem $\WTC(y_0)$,
then it follows directly from the definition of the subdifferential and the definition of $\WTC$ that 
\[
	\partial \WTC(y) \subseteq \partial_y \wT(y, \nu^{(0)},\alpha^{(0)}) = \{ \nabla_y \wT(y, \nu^{(0)},\alpha^{(0)}) \}.
\]
Since $\WTC(y)$ is convex and defined over its entire domain,  $\partial \WTC(y)$ is nonempty and we get that $\partial \WTC(y) = \{ \nabla_y \wT(y, \nu^{(0)},\alpha^{(0)}) \}$,
i.e., $\WTC(y)$ is differentiable.

In order to show that $\WTC$ a has Lipschitz-continuous gradient, suppose $y_1,y_2\in \real^d$
and let $(\nu^{(1)}, \alpha^{(1)})$ be an optimal solution to the optimization problem $\WTC(y_1)$.
Since $\wT(y, \nu^{(1)},\alpha^{(1)})$ has a Lipschitz-continuous gradient with constant $L$ with respect to the variable $y$, we get	
\begin{align*}
	\WTC(y_2) 
	&\leq \wT(y_2, \nu^{(1)},\alpha^{(1)})  \\
	& \leq \wT(y_1, \nu^{(1)},\alpha^{(1)})+ \langle \nabla_y \wT(y_1, \nu^{(1)},\alpha^{(1)}), y_2-y_1\rangle +\frac{L}{2} \|y_1-y_2\|^2 \\
	& = \WTC(y_1)+\langle \nabla \WTC(y_1), y_2-y_1\rangle +\frac{L}{2} \|y_1-y_2\|^2,
\end{align*}
which is desired inequality.

\emph{Interpolation property.}
In order to establish this part of the claim, we first show
that when \eqref{E:conic} and \eqref{E:Lipchitzxi} hold,
then for all $i\in I$
an optimal solution to  the optimization problem $\WTC(x_i)$ is given by
\begin{align*}
	& \nu^*=0, \\% \quad \alpha^* = \ee{i}.\\
	& \alpha^*_j = \begin{cases}
	1, & j=i, \\
	0, & \text{otherwise}, \\
	\end{cases} \quad \forall j\in I.
\end{align*}
Indeed, substituting these values in the optimality conditions,
we get that \eqref{E:nuoptcond} follows from \eqref{E:conic}, since
\[
	P_C(-x_i+\sum_{i\in I} \alpha^*_i  (x_i-\frac{1}{L}g_i))= P_C(-x_i+(x_i-\frac{1}{L}g_i))=P_C(-\frac{1}{L}g_i) = 0,
\]
and \eqref{E:forderoptalpha} becomes
\begin{align*}
	& -L\langle x_i-\frac{1}{L}g_i, x_i-(x_i-\frac{1}{L}g_i)\rangle+ f_i-\frac{1}{2L}\|g_i\|^2 \leq \\&\qquad -L\langle x_j-\frac{1}{L}g_j, x_i-(x_i-\frac{1}{L}g_i)\rangle+ f_j-\frac{1}{2L}\|g_j\|^2 ,
\end{align*}
which reduces exactly to \eqref{E:Lipchitzxi}.

The value and gradient of $\WTC(x_i)$ can now be determined from
\[
 \WTC(x_i)=\wT(x_i, \nu^*, \alpha^*) = \frac{L}{2}\|x_i - (x_i-\frac{1}{L}g_i)\|^2+f_i-\frac{1}{2L}\|g_i\|^2=f_i
\]
and
\[
	\nabla \WTC(x_i) = \nabla_y \wT(x_i, \nu^*, \alpha^*) = L(x_i - (x_i-\frac{1}{L}g_i)) = g_i.
\]
\end{proof}

\begin{remark}
In this paper we do not consider the strongly-convex case, however, note that the construction above can be
used to generate strongly-convex functions
using the well-known property of strongly convex functions which states that a function $f$ is strongly convex with a constant $\mu$ and a has Lipschitz-continuous gradient
with constant $L$ if and only if $f(x)-\frac{\mu}{2}\|x\|^2$ is convex and 
a has Lipschitz-continuous gradient
with constant $L-\mu$.
\end{remark}

\section{A worst-case function}\label{S:family}
In this section, we construct a smooth and convex function 
and investigate its properties.
This function will form the basis to the lower complexity proof in the next section.

\begin{definition}\label{D:family}
Let $N\in \mathbb{N}$ and $L>0$ be fixed and let $\zeta = (\zeta_0,\dots,\zeta_{N+2})\in \real^{N+3}$ be a given vector such that
\begin{equation}\label{E:zetacondition}
\vz_0>\dots>\vz_{N+1}>\vz_{N+2}=0. 
\end{equation}
We define
$\vx{i} \in \real^{N+1}$, $\vg{i} \in \real^{N+1}$, $\vf{i} \in \real$ for $i\in \{0,\dots,N+1\}$ as follows:
\begin{equation}\label{E:wcfinterpolation}
\begin{aligned}
	& \vx{i} := -\sum_{j=0}^{i-1} \frac{\zeta_j -\zeta_{i+1}}{\sqrt{\zeta_j -\zeta_{j+1}}} \ee{j}, \quad i=0,\dots,N+1, \\
	& \vg{i} := L\sqrt{\zeta_i -\zeta_{i+1}} \ee{i}, \quad i=0,\dots,N,\\
	& \vg{N+1} := 0, \\
	& \vf{i} := \frac{L}{2}(\zeta_i + \zeta_{i+1}), \quad i=0,\dots,N,\\
	& \vf{N+1} := 0.
\end{aligned}
\end{equation}
In addition, %Using these definitions, and 
taking
$\mathcal{T}^\zeta := \{(\vx{i}, \vg{i}, \vf{i})\}_{i\in \{0,\dots,N+1\}}$,  we 
% define the function $\vw :\real^{N+1}\times \real^{N+1}\times \real^{N+2} \rightarrow\real$ by
set
\[
 \vw(y, \nu,\alpha) := w_{\mathcal{T}^\zeta}(y, \nu,\alpha),
\]
and 
% $\vW:\real^{N+1}\rightarrow\real$ by
\[
	\vW(y):=W_{\mathcal{T}^\zeta}^{\real^{N+1}_+}(y)  \left( \equiv \min_{\nu \in \real^{N+1}_+, \alpha\in \Delta_{N+2}}  \vw(y, \nu,\alpha)\right).
\]
\end{definition}

We intentionally leave the values of $\zeta$ undefined at this point, as most properties of the function $\vW$ can be established without introducing additional requirements on $\zeta$.
The specific values used in the lower complexity proof will be defined towards the end of this section.

The rest of this section is dedicated to the investigation of the special properties of $\vW(y)$.
We start with a few technical properties.
\begin{proposition}
The  following identities hold:
\begin{align}
	& f^\zeta_i -\frac{1}{2L} \| g^\zeta_{i}\|^2 = L \zeta_{i+1}, \qquad i=0,\dots,N+1, \label{E:fnormaval}\\
	& - \vx{i} + \frac{1}{L}\vg{i} =\sum_{j=0}^{i} \frac{\zeta_j -\zeta_{i+1}}{\zeta_j -\zeta_{j+1}} \vg{j} \in \real^{N+1}_+, \quad i=0,\dots,N+1, \label{E:xiequiv} \\
	&  \langle \vg{i}, -\vx{j}+ \frac{1}{L}\vg{j}\rangle = L \max(\zeta_i-\zeta_{j+1}, 0), \quad i,j=0,\dots,N+1. \label{E:maxdot}
\end{align}
\end{proposition}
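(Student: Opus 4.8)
The plan is to verify each of the three identities directly from the explicit formulas in Definition~\ref{D:family}, treating the cases $i\le N$ and $i=N+1$ separately where the definition of $\vg{i}$, $\vf{i}$ changes. For \eqref{E:fnormaval}, I would simply substitute: for $i\le N$ we have $\|\vg{i}\|^2 = L^2(\zeta_i-\zeta_{i+1})$ since $\ee{i}$ is a unit vector, so $\vf{i}-\tfrac{1}{2L}\|\vg{i}\|^2 = \tfrac{L}{2}(\zeta_i+\zeta_{i+1}) - \tfrac{L}{2}(\zeta_i-\zeta_{i+1}) = L\zeta_{i+1}$; and for $i=N+1$ both $\vf{N+1}$ and $\vg{N+1}$ vanish while $\zeta_{N+2}=0$, so the identity reads $0=0$. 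This is the easy one.

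For \eqref{E:xiequiv}, the key observation is that $\vg{j} = L\sqrt{\zeta_j-\zeta_{j+1}}\,\ee{j}$ for $j\le N$, so $\tfrac{\zeta_j-\zeta_{i+1}}{\zeta_j-\zeta_{j+1}}\vg{j} = L\,\tfrac{\zeta_j-\zeta_{i+1}}{\sqrt{\zeta_j-\zeta_{j+1}}}\,\ee{j}$. Summing over $j=0,\dots,i$ and noting the $j=i$ term vanishes (it carries a factor $\zeta_i-\zeta_{i+1}$... wait, no — the $j=i$ term is $\tfrac{\zeta_i-\zeta_{i+1}}{\zeta_i-\zeta_{i+1}}\vg{i}=\vg{i}$), I get $\sum_{j=0}^{i}\tfrac{\zeta_j-\zeta_{i+1}}{\zeta_j-\zeta_{j+1}}\vg{j} = \vg{i} + L\sum_{j=0}^{i-1}\tfrac{\zeta_j-\zeta_{i+1}}{\sqrt{\zeta_j-\zeta_{j+1}}}\ee{j} = \vg{i} - L\vx{i}$ by the very definition of $\vx{i}$; dividing the first term by $L$... more precisely $-\vx{i}+\tfrac{1}{L}\vg{i}$ matches after pulling out the overall $L$. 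The membership in $\real^{N+1}_+$ follows because each coefficient $\tfrac{\zeta_j-\zeta_{i+1}}{\zeta_j-\zeta_{j+1}}$ is nonnegative (numerator and denominator both nonnegative by \eqref{E:zetacondition}, since $j\le i$ implies $\zeta_j\ge\zeta_{i+1}$) and each $\vg{j}$ points along $\ee{j}$ with a nonnegative coefficient; the case $i=N+1$ works because $\zeta_{N+2}=0$ and the sum only involves $\vg{0},\dots,\vg{N}$.

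For \eqref{E:maxdot}, I would plug in $\vg{i}=L\sqrt{\zeta_i-\zeta_{i+1}}\,\ee{i}$ (or $0$ if $i=N+1$) and use the representation \eqref{E:xiequiv} just established for $-\vx{j}+\tfrac1L\vg{j}$. Since the $\vg{k}$ appearing in that representation are orthogonal and $\vg{i}$ is a multiple of $\ee{i}$, the inner product picks out only the $k=i$ term in the sum, provided $i\le j$ (so that $\ee{i}$ actually appears); this gives $\langle\vg{i}, -\vx{j}+\tfrac1L\vg{j}\rangle = \tfrac{\zeta_i-\zeta_{j+1}}{\zeta_i-\zeta_{i+1}}\|\vg{i}\|^2 = L(\zeta_i-\zeta_{j+1})$ when $i\le j$, and since $\zeta_i\ge\zeta_{j+1}$ in that regime this equals $L\max(\zeta_i-\zeta_{j+1},0)$. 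When $i>j$ the vector $-\vx{j}+\tfrac1L\vg{j}$ lies in $\linspan\{\ee{0},\dots,\ee{j}\}$, which is orthogonal to $\vg{i}$, so the inner product is $0$; and in that case $\zeta_i-\zeta_{j+1}\le 0$ (since $i\ge j+1$ forces $\zeta_i\le\zeta_{j+1}$), so $\max(\zeta_i-\zeta_{j+1},0)=0$ too, and the identity holds. The boundary cases $i=N+1$ or $j=N+1$ are checked separately using $\vg{N+1}=0$ and $\zeta_{N+2}=0$.

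The only mildly delicate point — and the one I would be most careful about — is the bookkeeping of the index ranges and the two regimes $i\le j$ versus $i>j$ in \eqref{E:maxdot}, together with consistently handling the exceptional index $i=N+1$ (resp.\ $j=N+1$) where the formulas degenerate; everything else is routine substitution. I expect no real obstacle beyond this careful case analysis.
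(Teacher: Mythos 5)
Your proposal is correct in substance and takes exactly the route the paper intends (the paper's own proof is simply ``follows directly from the definitions''): substitute the explicit formulas of Definition~\ref{D:family} and treat the degenerate index $N+1$ separately. One point you should not gloss over, though: your computation correctly yields
\[
\sum_{j=0}^{i}\frac{\zeta_j-\zeta_{i+1}}{\zeta_j-\zeta_{j+1}}\,\vg{j} \;=\; \vg{i}-L\,\vx{i}\;=\;L\Bigl(-\vx{i}+\tfrac{1}{L}\vg{i}\Bigr),
\]
so the identity \eqref{E:xiequiv} as printed is off by a factor of $L$ (the right-hand side should carry a prefactor $\tfrac{1}{L}$). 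This is a typo in the statement rather than an error in your verification, and you can confirm it by cross-checking against \eqref{E:maxdot}: if \eqref{E:xiequiv} held literally, your inner-product step would give $\frac{\zeta_i-\zeta_{j+1}}{\zeta_i-\zeta_{i+1}}\|\vg{i}\|^2=L^2(\zeta_i-\zeta_{j+1})$, not the $L(\zeta_i-\zeta_{j+1})$ you wrote. The membership in $\real^{N+1}_+$ and the identity \eqref{E:maxdot} itself are unaffected, and the rest of the paper uses \eqref{E:xiequiv} only for sign information, so nothing downstream breaks; but in a final write-up you should state the corrected identity explicitly rather than say ``matches after pulling out the overall $L$''.
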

\begin{proof}
\eqref{E:fnormaval}--\eqref{E:maxdot} follow directly from the definitions of $\vx{i}$, $\vg{i}$ and $\vf{i}$.
\end{proof}

The following corollary establishes the convexity, smoothness and interpolating properties of $\vW$.
In particular, it implies that $\nabla \vW(\vx{N+1})=\vg{N+1}=0$, hence
 $\vx{N+1}$ is a minimizer for $\vW$, and thus ${\vW}^* = \vf{N+1}=0$.
\begin{corollary}\label{C:vwintarpolating}
The function $\vW$ is convex and has a Lipschitz-continuous gradient with constant $L$.
In addition, $\vW$ is interpolating through $\mathcal{T}^\zeta$, i.e.,
$\vW(\vx{i})=\vf{i}$ and $\nabla \vW(\vx{i})=\vg{i}$ for all $i\in \{0,\dots,N+1\}$.
\end{corollary}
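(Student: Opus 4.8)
The plan is to derive the corollary as a direct consequence of Theorem~\ref{T:basicprop} applied to the triple set $\mathcal{T}^\zeta$ with kernel $C = \real^{N+1}_+$, so the task reduces to verifying the two hypotheses \eqref{E:conic} and \eqref{E:Lipchitzxi} for every $i\in\{0,\dots,N+1\}$, using the three identities \eqref{E:fnormaval}--\eqref{E:maxdot} just established. Convexity, membership in $C^{1,1}_L$, and the interpolation conclusion then come for free from the theorem. Note that $C=\real^{N+1}_+$ is indeed closed, convex, and contains $0$, so the construction of Definition~\ref{D:generalfamily} applies.

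First I would check \eqref{E:conic}, i.e. $P_C(-\tfrac{1}{L}\vg{i})=0$ for each $i$. For $i\le N$ we have $-\tfrac1L\vg{i} = -\sqrt{\zeta_i-\zeta_{i+1}}\,\ee{i}$, which has a single nonpositive coordinate (nonpositive since $\zeta_i>\zeta_{i+1}$ by \eqref{E:zetacondition}) and zeros elsewhere; its Euclidean projection onto the nonnegative orthant is the coordinatewise positive part, which is $0$. For $i=N+1$, $\vg{N+1}=0$ so the claim is trivial. Hence \eqref{E:conic} holds for all $i$.

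Next I would verify the Lipschitz/interpolation inequality \eqref{E:Lipchitzxi}: for all $i,j$,
\[
\tfrac{1}{2L}\|\vg{i}-\vg{j}\|^2 \le \vf{j}-\vf{i}-\langle \vg{i}, \vx{j}-\vx{i}\rangle.
\]
The clean way is to expand the left-hand side as $\tfrac{1}{2L}\|\vg{i}\|^2 + \tfrac{1}{2L}\|\vg{j}\|^2 - \tfrac1L\langle\vg{i},\vg{j}\rangle$, move the $\|\vg{i}\|^2$ and $\|\vg{j}\|^2$ terms to the right using \eqref{E:fnormaval} (which gives $\vf{i}-\tfrac{1}{2L}\|\vg{i}\|^2 = L\zeta_{i+1}$), and rewrite the cross terms. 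Concretely, the inequality is equivalent to
\[
\langle \vg{i}, -\vx{j}+\tfrac1L\vg{j}\rangle \le \big(\vf{j}-\tfrac{1}{2L}\|\vg{j}\|^2\big) - \big(\vf{i}-\tfrac{1}{2L}\|\vg{i}\|^2\big) + \langle\vg{i}, -\vx{i}+\tfrac1L\vg{i}\rangle,
\]
and then applying \eqref{E:fnormaval} to the two parenthesized differences and \eqref{E:maxdot} to both inner products reduces it to the scalar inequality
\[
L\max(\zeta_i-\zeta_{j+1},0) \le L\zeta_{j+1} - L\zeta_{i+1} + L\max(\zeta_i-\zeta_{i+1},0) = L(\zeta_{j+1}-\zeta_{i+1}) + L(\zeta_i-\zeta_{i+1}),
\]
where the last equality uses $\zeta_i>\zeta_{i+1}$. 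This scalar inequality holds in both cases: if $\zeta_i\le\zeta_{j+1}$ the left side is $0$ and the right side is $\ge L(\zeta_i-\zeta_{i+1})\ge 0$; if $\zeta_i>\zeta_{j+1}$ it simplifies to $0\le L(\zeta_{j+1}-\zeta_{i+1}) + L(\zeta_{j+1}-\zeta_{i+1})$, wait—I should be careful here; after cancelling $L\zeta_i$ from both sides it becomes $-\zeta_{j+1} \le \zeta_{j+1} - 2\zeta_{i+1}$, i.e. $\zeta_{i+1}\le\zeta_{j+1}$, which in this case follows because $\zeta_i>\zeta_{j+1}$ forces $j+1\le i$ hence $j+1 \le i$ and by the strict monotonicity \eqref{E:zetacondition}, $\zeta_{j+1}\ge\zeta_i>\zeta_{i+1}$. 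So the inequality holds throughout.

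The only genuine obstacle is bookkeeping: keeping the index ranges straight (especially the degenerate case $i=N+1$ where $\vg{N+1}=\vf{N+1}=0$ and the sums in \eqref{E:xiequiv} run to $N+1$), and making sure the case split in the scalar inequality correctly uses the strict ordering of the $\zeta_k$'s to conclude $\zeta_{i+1}\le\zeta_{j+1}$ whenever $\zeta_i>\zeta_{j+1}$. Once \eqref{E:conic} and \eqref{E:Lipchitzxi} are in hand, the corollary follows verbatim from Theorem~\ref{T:basicprop}.
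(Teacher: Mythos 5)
Your overall strategy is exactly the paper's: invoke Theorem~\ref{T:basicprop} with kernel $C=\real^{N+1}_+$ and check \eqref{E:conic} and \eqref{E:Lipchitzxi} using \eqref{E:fnormaval}--\eqref{E:maxdot}. The verification of \eqref{E:conic} is correct. However, the verification of \eqref{E:Lipchitzxi} contains a sign error that invalidates it: when you move terms, the two inner products land on the wrong sides. The correct rearrangement of \eqref{E:Lipchitzxi} is
\[
\langle g^\zeta_i, -x^\zeta_i+\tfrac1L g^\zeta_i\rangle - \langle g^\zeta_i, -x^\zeta_j+\tfrac1L g^\zeta_j\rangle \le \bigl(f^\zeta_j-\tfrac{1}{2L}\|g^\zeta_j\|^2\bigr)-\bigl(f^\zeta_i-\tfrac{1}{2L}\|g^\zeta_i\|^2\bigr),
\]
which via \eqref{E:fnormaval} and \eqref{E:maxdot} becomes $\zeta_i-\zeta_{j+1}\le\max(\zeta_i-\zeta_{j+1},0)$, trivially true. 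Your version has the roles of the $i$- and $j$-inner-products swapped, and the scalar inequality you arrive at, $\max(\zeta_i-\zeta_{j+1},0)\le(\zeta_{j+1}-\zeta_{i+1})+(\zeta_i-\zeta_{i+1})$, is false whenever $j>i$: as you yourself compute, it simplifies to $\zeta_{i+1}\le\zeta_{j+1}$, which fails for $j>i$ because $\zeta$ is strictly decreasing (take $i=0$, $j=1$: the claim becomes $\zeta_1\le\zeta_2$).

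The case analysis that appears to rescue it contains a second, compensating error: for a strictly decreasing sequence, $\zeta_i>\zeta_{j+1}$ forces $i<j+1$ (a larger value means a smaller index), not $j+1\le i$; moreover the conclusion you draw from it, $\zeta_{j+1}\ge\zeta_i$, directly contradicts the case hypothesis $\zeta_i>\zeta_{j+1}$. To repair the argument, redo the rearrangement as above, or follow the paper's shortcut: for $i\ne j$ the supports of $g^\zeta_i$, $g^\zeta_j$, and $x^\zeta_i$ give $\langle g^\zeta_i,x^\zeta_i\rangle=\langle g^\zeta_i,g^\zeta_j\rangle=0$, so \eqref{E:Lipchitzxi} reduces directly to $L(\zeta_i-\zeta_{j+1})\le\langle g^\zeta_i,-x^\zeta_j+\tfrac1L g^\zeta_j\rangle$, which is immediate from \eqref{E:maxdot}.
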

\begin{proof}
By Theorem~\ref{T:basicprop}, it is enough to verify that $\vx{i}$, $\vf{i}$, and $\vg{i}$ satisfy conditions \eqref{E:conic} and \eqref{E:Lipchitzxi} for all $i\in  \{0,\dots,N+1\}$.

Condition \eqref{E:conic} follows immediately from the properties of the projection operator on the nonnegative orthant.
In order to establish~\eqref{E:Lipchitzxi}, 
from the definitions \eqref{E:wcfinterpolation}
it remains to show that for any $i\neq j \in  \{0,\dots,N+1\}$, 
\begin{align*}
& \frac{L}{2} ((\zeta_i-\zeta_{i+1}) + (\zeta_j-\zeta_{j+1})) \leq \frac{L}{2}(\zeta_j+\zeta_{j+1})-\frac{L}{2}(\zeta_i+\zeta_{i+1}) - \langle \vg{i}, \vx{j}-\vx{i}\rangle
\end{align*}
which, by $\langle \vg{i}, \vx{i}\rangle=\langle \vg{i}, \frac{1}{L}\vg{j}\rangle=0$ for $i\neq j$ reduces to
\begin{align*}
& L(\zeta_i -\zeta_{j+1}) \leq  \langle \vg{i}, -\vx{j}+\frac{1}{L}\vg{j}\rangle 
\end{align*}
and follows from~\eqref{E:maxdot}.
\end{proof}

The next lemma identifies an important family of optimal solutions to the convex optimization problem $\vW$.
% These solutions will prove useful when we continue our investigation of $\vW$.
\begin{lemma}\label{L:alphaform}
For any $y\in \real^{N+1}$, there exists an optimal solution $(\nu^*, \alpha^*)$ to the optimization problem $\vW(y)$ 
where $\alpha^*\in\Delta_{N+2}$ is of the form
\begin{equation}\label{E:alphaform}
	\alpha^* = (\underbrace{0,\dots,0}_{m\text{ times}} ,\alpha^*_m,1-\alpha^*_m,\underbrace{0,\dots,0}_{N-m\text{ times}})
\end{equation}
for some $m\in\{0,\dots,N\}$.
\end{lemma}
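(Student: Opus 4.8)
The plan is to analyze the structure of the optimality conditions in Lemma \ref{L:firstorder}, specialized to the data $\mathcal{T}^\zeta$. Fix $y$ and let $(\nu^*,\alpha^*)$ be any optimal solution. Write $z := y + \nu^* - \sum_{i} \alpha_i^* (x^\zeta_i - \tfrac1L g^\zeta_i)$, so that \eqref{E:forderoptalpha} says: the quantity $h_j := -L\langle x^\zeta_j - \tfrac1L g^\zeta_j, z\rangle + (f^\zeta_j - \tfrac1{2L}\|g^\zeta_j\|^2)$ attains its minimum over $j\in\{0,\dots,N+1\}$ on the support of $\alpha^*$. Using \eqref{E:fnormaval} and the identity $-x^\zeta_j + \tfrac1L g^\zeta_j = \sum_{k=0}^{j} \tfrac{\zeta_k - \zeta_{j+1}}{\zeta_k - \zeta_{k+1}} g^\zeta_k$ from \eqref{E:xiequiv}, together with $g^\zeta_k = L\sqrt{\zeta_k - \zeta_{k+1}}\,\ee{k}$, I would rewrite $h_j$ explicitly in terms of the coordinates $z_0,\dots,z_N$ of $z$ and the scalars $\zeta$. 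The key point I expect to extract is that $h_j$, as a function of the index $j$, is a \emph{strictly convex} (discrete) sequence in $j$: the forward difference $h_{j+1} - h_j$ is strictly increasing in $j$. Granting this, the minimum of $h_j$ is attained on at most two consecutive indices, which forces $\alpha^*$ to be supported on $\{m, m+1\}$ for a single $m$ — exactly the claimed form \eqref{E:alphaform}, with $m\in\{0,\dots,N\}$ since the support has two consecutive elements inside $\{0,\dots,N+1\}$.

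The main steps, in order: (1) substitute the explicit formulas for $x^\zeta_i, g^\zeta_i, f^\zeta_i$ into the expression $h_j$ and simplify using \eqref{E:fnormaval}, \eqref{E:xiequiv}, \eqref{E:maxdot}; (2) compute the second forward difference $h_{j+1} - 2h_j + h_{j-1}$ and show it is strictly positive using the strict monotonicity $\zeta_0 > \dots > \zeta_{N+1} > \zeta_{N+2} = 0$ — this is where the structure $\langle g^\zeta_i, -x^\zeta_j + \tfrac1L g^\zeta_j\rangle = L\max(\zeta_i - \zeta_{j+1}, 0)$ and the convexity of $t\mapsto\max(t,0)$ should drive everything; (3) conclude from strict discrete convexity that $\argmin_j h_j$ is a set of at most two consecutive integers; (4) invoke the standard fact that among optimal solutions of a linear program over a simplex with a given objective, one can choose a vertex of the optimal face — here the optimal face is $\{\alpha \in \Delta_{N+2} : \mathrm{supp}(\alpha) \subseteq \argmin_j h_j\}$, so there is an optimal $\alpha^*$ supported on (at most) two consecutive indices. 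Actually one must be slightly careful: $z$ itself depends on $\alpha^*$, so $h_j$ is not a fixed objective. I would handle this by first fixing the optimal $z$ (equivalently, fixing $\sum_i \alpha_i^* (x^\zeta_i - \tfrac1L g^\zeta_i)$, which is determined once we fix an optimal point $\nu^*$ and the optimal value) and then noting that the set of optimal $\alpha$ compatible with this fixed $z$ is a face of the simplex cut out by the linear conditions "$h_j$ minimal on $\mathrm{supp}(\alpha)$" together with the affine constraint $\sum_i \alpha_i(x^\zeta_i - \tfrac1L g^\zeta_i) = \text{const}$; a vertex-type argument then still yields a solution with small support, but I should check that a two-consecutive-index support suffices to match the constraint rather than needing more points.

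The step I expect to be the genuine obstacle is reconciling the "at most two consecutive indices in $\argmin_j h_j$" conclusion with the affine constraint that ties $\alpha^*$ back to $z$: a priori the optimal $\alpha^*$ might need support on two indices that are \emph{not} consecutive, or the fixed-point coupling between $z$ and $\alpha^*$ might only be consistent at a particular $m$. I anticipate that the resolution is that $\argmin_j h_j$ being two consecutive indices $\{m,m+1\}$ already pins down $z$ up to one scalar degree of freedom, and the constraint $\nu^* = P_C(\cdot)$ from \eqref{E:nuoptcond} with $C = \real^{N+1}_+$ plus the membership $-x^\zeta_i + \tfrac1L g^\zeta_i \in \real^{N+1}_+$ from \eqref{E:xiequiv} makes the system solvable with exactly a two-term $\alpha^*$. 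Making this coupling argument clean — rather than hand-wavy — is the crux; everything else is the routine but essential computation in step (2) verifying strict discrete convexity of $j \mapsto h_j$.
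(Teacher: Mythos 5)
Your step (2) is where the argument breaks. Writing $h_j$ out explicitly (with $z=y+\nu^*-\sum_i\alpha_i^*(x^\zeta_i-\tfrac1L g^\zeta_i)$ and $a_k:=z_k/\sqrt{\zeta_k-\zeta_{k+1}}$) gives
\[
h_{j+1}-h_j \;=\; L(\zeta_{j+1}-\zeta_{j+2})\Bigl(\textstyle\sum_{k=0}^{j+1}a_k-1\Bigr),
\]
so the second forward difference is $L(\zeta_{j+2}-\zeta_{j+3})\bigl(\sum_{k\le j+2}a_k-1\bigr)-L(\zeta_{j+1}-\zeta_{j+2})\bigl(\sum_{k\le j+1}a_k-1\bigr)$, which has no fixed sign: only monotonicity of $\zeta$ is assumed (the gaps $\zeta_j-\zeta_{j+1}$ can increase or decrease), and the partial sums depend on the arbitrary $y$. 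So $j\mapsto h_j$ is \emph{not} strictly discretely convex. What is true — using the projection formula \eqref{E:nuoptcond} with $C=\real^{N+1}_+$, which forces $z\in\real^{N+1}_+$ and hence $a_k\ge 0$ (a fact your plan never invokes) — is that the first differences change sign at most once, i.e.\ $h$ is unimodal. But unimodality only makes $\argmin_j h_j$ an \emph{interval} of consecutive indices, which can have length greater than two: if $z_{j}=0$ for several consecutive $j$ and the partial sum reaches $1$ before them, several consecutive first differences vanish. Hence the KKT conditions alone do not confine the support of an optimal $\alpha^*$ to two consecutive indices, and your conclusion (3) does not follow.

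The "coupling" issue you flag at the end as the crux is therefore not a technicality to be checked but the entire content of the lemma, and your anticipated resolution rests on the false premise that $\argmin_j h_j$ has at most two elements. The paper resolves it by a purely primal exchange argument that bypasses the KKT analysis altogether: given any feasible $(\nu,\alpha)$ with $\alpha_{i_1},\alpha_{i_3}>0$ and $i_1<i_2<i_3$, it perturbs $\alpha$ along the direction $\delta$ with $\delta_{i_1}=L(\zeta_{i_2+1}-\zeta_{i_3+1})$, $\delta_{i_2}=L(\zeta_{i_3+1}-\zeta_{i_1+1})$, $\delta_{i_3}=L(\zeta_{i_1+1}-\zeta_{i_2+1})$ (so that both $\sum_i\delta_i=0$ and $\sum_i\delta_i\zeta_{i+1}=0$, preserving the simplex constraint and the linear term) and simultaneously shifts $\nu$ by $-t\sum_i\delta_i(x^\zeta_i-\tfrac1L g^\zeta_i)$ to keep the quadratic term unchanged; the nontrivial computation is that this shift lies in $\real^{N+1}_+$, via \eqref{E:xiequiv} and a case analysis on $j\le i_1$, $i_1<j\le i_2$, $i_2<j\le i_3$. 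Iterating shrinks the support to two consecutive indices. Your face/vertex argument does not substitute for this: a vertex of the polytope cut out by $\Delta_{N+2}$ together with the affine constraint fixing $\sum_i\alpha_i(x^\zeta_i-\tfrac1L g^\zeta_i)$ can still have large support, and it is precisely the freedom to move $\nu$ inside $\real^{N+1}_+$ (rather than holding that affine combination fixed) that makes the reduction to two \emph{consecutive} indices possible.
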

\begin{proof}
We proceed by showing that 
for any $(\nu, \alpha)\in \real^{N+1}_+ \times \Delta_{N+2}$ that is feasible for the optimization problem $\vW(y)$,
if $\alpha_{i_1}>0$ and $\alpha_{i_3}>0$
for some $0\leq i_1 < i_2 < i_3\leq N+1$
then it is possible 
to decrease either $\alpha_{i_1}$ or $\alpha_{i_3}$ to zero while increasing $\alpha_{i_2}$ 
without affecting the objective or violating the constraints.

Let $\delta\in\real^{N+2}$ be defined by
\begin{align*}
	\delta_{i_1} &= (\vf{i_2} -\frac{1}{2L} \| \vf{i_2}\|^2)-(\vf{i_3} -\frac{1}{2L} \| \vf{i_3}\|^2) = L(\zeta_{i_2+1} - \zeta_{i_3+1}),\\
	\delta_{i_2} &= (\vf{i_3} -\frac{1}{2L} \| \vf{i_3}\|^2)-(\vf{i_1} -\frac{1}{2L} \| \vf{i_1}\|^2) = L(\zeta_{i_3+1} - \zeta_{i_1+1}),\\
	\delta_{i_3} &= (\vf{i_1} -\frac{1}{2L} \| \vf{i_1}\|^2)-(\vf{i_2} -\frac{1}{2L} \| \vf{i_2}\|^2) = L(\zeta_{i_1+1} - \zeta_{i_2+1}),
\end{align*}
with $\delta_i=0$ for $i \notin \{i_1,i_2,i_3\}$, and, in addition, set
\begin{align*}
	& t := \min \left(\frac{\alpha_{i_1}}{\delta_{i_1}}, \frac{\alpha_{i_3}}{\delta_{i_3}}\right), \\
	&\nu' := \nu-t \sum_{i=0}^{N+1} \delta_i  (\vx{i}-\frac{1}{L}\vg{i}), \\
	&\alpha' := \alpha -t \delta,
\end{align*}
then clearly either  $\alpha'_{i_1}=0$ or $\alpha'_{i_3}=0$ and it remains to show that $(\nu',\alpha')$ is feasible for $\vW$ and attains same objective value, i.e., (i) $\nu'\in \real^{N+1}_+$, (ii) $\alpha' \in \Delta_{N+2}$, and (iii) $\vw(y, \nu',\alpha') = \vw(y, \nu,\alpha)$.

(i) In order to establish that $\nu'\in \real^{N+1}_+$, since $\nu\in \real^{N+1}_+$ and $t>0$, it is enough to show that
\[
	-\sum_{i=0}^{N+1} \delta_i (\vx{i}-\frac{1}{L}\vg{i}) \in \real^{N+1}_+.
\]
From \eqref{E:xiequiv} and the definition of $\delta$, we have
\begin{align*}
 &  -\sum_{i=0}^{N+1} \delta_i (\vx{i}-\frac{1}{L}\vg{i}) = \sum_{k=1,2,3} \delta_{i_k} \sum_{j=0}^{i_k} \frac{\zeta_j -\zeta_{i_k+1}}{\zeta_j -\zeta_{j+1}} \vg{j}\\
 & = \sum_{j=0}^{i_1} \frac{\delta_{i_1}(\zeta_j -\zeta_{i_1+1})+\delta_{i_2}(\zeta_j -\zeta_{i_2+1})+\delta_{i_3}(\zeta_j -\zeta_{i_3+1})}{\zeta_j -\zeta_{j+1}} \vg{j} \\
 & \quad + \sum_{j=i_1+1}^{i_2} \frac{\delta_{i_2}(\zeta_j -\zeta_{i_2+1})+\delta_{i_3}(\zeta_j -\zeta_{i_3+1})}{\zeta_j -\zeta_{j+1}} \vg{j}
+\sum_{j=i_2+1}^{i_3} \frac{\delta_{i_3}(\zeta_j -\zeta_{i_3+1})}{\zeta_j -\zeta_{j+1}} \vg{j},
\end{align*}
then since the vectors $g_j$ are in $\real^{N+1}_+$, it is sufficient to verify that for all $j$ the coefficient of $g_j$ is nonnegative.
We consider three cases: 
first, suppose $j\leq i_1$, then the coefficient of $\vg{j}$ is:
\begin{align*}
& L(\zeta_{i_2+1} -\zeta_{i_3+1} )\frac{\zeta_j -\zeta_{i_1+1}}{\zeta_j -\zeta_{j+1}}
+ L(\zeta_{i_3+1} -\zeta_{i_1+1} )\frac{\zeta_j -\zeta_{i_2+1}} {\zeta_j -\zeta_{j+1}}
+ L(\zeta_{i_1+1} -\zeta_{i_2+1} )\frac{\zeta_j -\zeta_{i_3+1}} {\zeta_j -\zeta_{j+1}} = 0.
\end{align*}
Next, suppose $i_1<j\leq i_2$, then  the coefficient of $\vg{j}$ is
\begin{align*}
& L(\zeta_{i_3+1} -\zeta_{i_1+1} )\frac{\zeta_j -\zeta_{i_2+1}} {\zeta_j -\zeta_{j+1}}
+ L(\zeta_{i_1+1} -\zeta_{i_2+1} )\frac{\zeta_j -\zeta_{i_3+1}} {\zeta_j -\zeta_{j+1}} = -L(\zeta_{i_2+1} -\zeta_{i_3+1} )\frac{\zeta_j -\zeta_{i_1+1}}{\zeta_j -\zeta_{j+1}},
\end{align*}
which is positive since $\{\zeta_i\}$ is strictly decreasing.
Lastly, for the case $i_2<j\leq i_3$, the coefficient of $\vg{j}$ is
\begin{align*}
& L(\zeta_{i_1+1} - \zeta_{i_2+1})\frac{\zeta_j -\zeta_{i_3+1}}{\zeta_j -\zeta_{j+1}},
\end{align*}
which is again positive from the monotonicity of $\zeta$.

(ii) $\alpha' \in \Delta_{N+2}$. 
Firstly, the nonnegativity of $\alpha'$ follows from the choice of $t$ and since $\delta_{i_1}, \delta_{i_3}>0$ while $\delta_{i_2}<0$. 
Secondly, from the definition of $\alpha'$ and
\[
	\sum_{i=0}^{N+1} \delta_i = \delta_{i_1} + \delta_{i_2} + \delta_{i_3} = 0,
\]
and we get that $\sum_{i=0}^{N+1} \alpha'_i=1$,

(iii) $\vw(y, \nu',\alpha') = \vw(y, \nu,\alpha)$. By the definition of $\vw$ it is enough to show that
\begin{align*}
	& y+\nu-\sum_{i=0}^{N+1} \alpha_i  (\vx{i}-\frac{1}{L}\vg{i}) = y+\nu'-\sum_{i=0}^{N+1} \alpha'_i  (\vx{i}-\frac{1}{L}\vg{i}) \\
	\text{and}\quad & \sum_{i=0}^{N+1} \alpha_i (\vf{i} -\frac{1}{2L} \| \vg{i}\|^2) = \sum_{i=0}^{N+1} \alpha'_i (\vf{i} -\frac{1}{2L} \| \vg{i}\|^2).
\end{align*}
Indeed, the first equality follows from 
\[
	\nu-\nu' = t \sum_{i=0}^{N+1} \delta_i  (\vx{i}-\frac{1}{L}\vg{i}) = \sum_{i=0}^{N+1} (\alpha_i -\alpha'_i )  (\vx{i}-\frac{1}{L}\vg{i}),
\]
and the second equality follows from
\begin{align*}
	\sum_{i=0}^{N+1}& (\alpha_i -\alpha'_i ) (\vf{i} -\frac{1}{2L} \| \vg{i}\|^2) =\sum_{i=0}^{N+1} t\delta_i (\vf{i} -\frac{1}{2L} \| \vg{i}\|^2) \\
	 = &  t \left((\vf{i_2} -\frac{1}{2L} \| \vf{i_2}\|^2)-(f_{i_3} -\frac{1}{2L} \| \vf{i_3}\|^2)\right)(\vf{i_1} -\frac{1}{2L} \| \vf{i_1}\|^2)\\
	& + t \left( (\vf{i_3} -\frac{1}{2L} \| \vf{i_3}\|^2)-(f_{i_1} -\frac{1}{2L} \| \vf{i_1}\|^2) \right) (\vf{i_2} -\frac{1}{2L} \| \vf{i_2}\|^2) \\
	& + t \left( (\vf{i_1} -\frac{1}{2L} \| \vf{i_1}\|^2)-(f_{i_2} -\frac{1}{2L} \| \vf{i_2}\|^2) \right) (\vf{i_3} -\frac{1}{2L} \| \vf{i_3}\|^2)= 0.
\end{align*}

\medskip

Repeatedly applying the procedure described above on an optimal solution of $\vW$, $(\nu^*, \alpha^*)$,
we get that the distance between the first and last positive elements of $\alpha_*$ can always be decreased as long as it is greater than one, which completes the proof.
\end{proof}

Note that the previous result allows for an efficient numerical solution of $\vW(y)$ via $N+1$ independent one-dimensional convex 
problems %(using the closed-form solution of $\nu^*$)
instead of a single $N+1$-dimensional problem.
The following theorem shows that in some cases it is possible to further reduce the number of subproblems that need to be solved in order to find a solution to $\vW(y)$.
More importantly, the following property turns out to be fundamental and will be later used to establish the main properties of $\vW$.
\begin{theorem}\label{T:oneIteration}
Let $\bar y=(\bar y_0,\dots,\bar y_N)$ be a vector in $\real^{N+1}$.
If $\bar y_k\geq 0$ for some $k\in\{0,\dots, N\}$, then $\vW(\bar y)$ has an optimal solution $(\nu^*, \alpha^*)$, where $\alpha^*$ is of the form~\eqref{E:alphaform} and, in addition, satisfies $\alpha^*_{k+1}=\dots=\alpha^*_{N+1}=0$.
\end{theorem}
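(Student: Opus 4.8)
The plan is to start from the optimal solution $(\nu^*,\alpha^*)$ of the form~\eqref{E:alphaform} furnished by Lemma~\ref{L:alphaform}, say with support $\{m,m+1\}$, and argue that if $m\geq k+1$ then we can ``push the support down'' toward indices $\leq k$ without increasing the objective, contradicting nothing but allowing us to choose $m\leq k$ (so that $\alpha^*_{k+1}=\dots=\alpha^*_{N+1}=0$ holds, in fact $\alpha^*_j=0$ for all $j>k$ since the support has size at most $2$ and sits at $\{m,m+1\}$ with $m\le k$ forcing $m+1\le k+1$; one should double-check whether the statement wants $m+1\le k$ or $m+1\le k+1$ — the latter is what naturally comes out and matches ``$\alpha^*_{k+1}=\dots=0$'' only if $m+1\le k$, so the genuine target is $m\le k-1$ unless $k$ itself can carry weight; I will aim for $m\le k$ and verify the index bookkeeping against the claimed conclusion). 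The key leverage is the hypothesis $\bar y_k\ge 0$: this is precisely a sign condition on the $k$-th coordinate that should make moving weight from index $m>k$ onto a lower index decrease (or not increase) the quadratic term in $\vw$, because by~\eqref{E:xiequiv} the vectors $-\vx{i}+\tfrac1L\vg{i}$ have a specific nonnegative-orthant structure, and by~\eqref{E:fnormaval} the linear coefficients $\vf{i}-\tfrac1{2L}\|\vg{i}\|^2 = L\zeta_{i+1}$ are strictly decreasing in $i$.

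First I would write out the optimality condition~\eqref{E:forderoptalpha} for the solution from Lemma~\ref{L:alphaform}: with support $\{m,m+1\}$, feasibility of $(\nu^*,\alpha^*)$ forces $\nu^* = P_C\bigl(-\bar y + \alpha^*_m(\vx{m}-\tfrac1L\vg{m}) + (1-\alpha^*_m)(\vx{m+1}-\tfrac1L\vg{m+1})\bigr)$, and the inequality~\eqref{E:forderoptalpha} comparing $j\in\{m,m+1\}$ against an arbitrary $k'$ must hold. I would then compute, using~\eqref{E:maxdot} and~\eqref{E:fnormaval}, the quantity
\[
\phi(j) := -L\langle \vx{j}-\tfrac1L\vg{j},\ \bar y+\nu^*-\textstyle\sum_i\alpha^*_i(\vx{i}-\tfrac1L\vg{i})\rangle + L\zeta_{j+1},
\]
and show that the hypothesis $\bar y_k\ge 0$ combined with $\nu^*\in\real^{N+1}_+$ implies that $\phi$ is minimized (among $j\in\{0,\dots,N+1\}$) at some index $\le k$; this forces the support of any optimizer — or at least one optimizer obtained after re-running the Lemma~\ref{L:alphaform} reduction on the set of minimizers of $\phi$ — to lie at indices $\le k$. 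Concretely, I expect that the inner product $\langle \vg{j}, \bar y + \nu^*\rangle$ is, by construction of $\vx{},\vg{}$ in coordinates, essentially $L\sqrt{\zeta_j-\zeta_{j+1}}\,(\bar y_j + \nu^*_j)$, so its sign is controlled coordinatewise, and the nonnegativity of $\bar y_k$ makes the $k$-th term behave favorably; meanwhile the $\sum_i\alpha^*_i(\cdots)$ part is handled by~\eqref{E:maxdot}.

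The main obstacle, and the step I would spend the most care on, is making the ``push down'' rigorous: I need to exhibit an explicit feasible perturbation — analogous to the $\delta$-construction in the proof of Lemma~\ref{L:alphaform} — that moves weight from $\{m,m+1\}$ with $m>k$ onto $\{k,k+1\}$ (or onto $\{m-1,m\}$, iterating) while (a) keeping $\alpha$ in $\Delta_{N+2}$, (b) keeping the adjusted $\nu' = \nu^* - (\text{shift})$ in $\real^{N+1}_+$, which is where $\bar y_k\ge 0$ must be used to absorb a potentially negative contribution, and (c) not increasing $\vw$, which will follow from the monotonicity $\zeta_{i+1}$ strictly decreasing together with~\eqref{E:maxdot} controlling the cross terms. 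Once such a monotone descent step exists whenever the support lies entirely above $k$, iterating it (finitely, since indices strictly decrease) and then re-applying Lemma~\ref{L:alphaform} to restore the two-point form~\eqref{E:alphaform} gives the claimed optimal solution with $\alpha^*_{k+1}=\dots=\alpha^*_{N+1}=0$. I would close by noting that feasibility and the optimality conditions~\eqref{E:nuoptcond}--\eqref{E:forderoptalpha} are preserved throughout, so the final $(\nu^*,\alpha^*)$ is genuinely optimal for $\vW(\bar y)$.
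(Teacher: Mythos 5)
There is a genuine gap: the step you yourself flag as ``the main obstacle'' --- exhibiting the perturbation that moves mass from support $\{m,m+1\}$ with $m>k$ down to lower indices --- is never carried out, and the route you propose for it is unlikely to work. The $\delta$-construction in Lemma~\ref{L:alphaform} preserves the objective \emph{exactly} because the three coefficients $\delta_{i_1},\delta_{i_2},\delta_{i_3}$ sum to zero and are chosen so that $\sum_i\delta_i(\vx{i}-\frac1L\vg{i})$ can be absorbed into $\nu$ without leaving $\real^{N+1}_+$ and without changing either the quadratic or the linear term of $\vw$. For a two-point-to-two-point move there is no analogous exact-preservation choice: shifting weight to lower indices genuinely changes $\sum_i\alpha_i(\vx{i}-\frac1L\vg{i})$, the compensating change in $\nu$ has the wrong sign in coordinates $j\in\{k+1,\dots,m\}$, and even if the directional derivative at the optimum were zero, convexity only gives that the objective does not decrease along the ray --- it can strictly increase, so ``not increasing $\vw$'' does not follow. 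A descent/invariance argument of this type cannot establish the claim.

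The mechanism that does work is the one you mention only in passing: the KKT comparison of the reduced costs $\phi(j)$ (the left- and right-hand sides of~\eqref{E:forderoptalpha}). Taking the Lemma~\ref{L:alphaform} solution with \emph{minimal} $m$, one assumes $m\geq k$ and $\alpha^*_m<1$ and shows the strict inequality $\phi(k)<\phi(m+1)$ by a chain of estimates combining (i) $\langle\vg{k},v\rangle>L(\zeta_k-\zeta_{k+1})$ for $v=\alpha^*_m(-\vx{m}+\frac1L\vg{m})+(1-\alpha^*_m)(-\vx{m+1}+\frac1L\vg{m+1})$, which uses~\eqref{E:maxdot} and $\alpha^*_m<1$; (ii) $\langle\vg{k},\bar y+\nu^*\rangle=L\sqrt{\zeta_k-\zeta_{k+1}}(\bar y_k+\nu^*_k)\geq0$, which is where $\bar y_k\geq0$ enters; and (iii) $\bar y+\nu^*+v\in\real^{N+1}_+$ from the projection identity~\eqref{E:nuoptcond}, together with the monotonicity of $\zeta$. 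Since $\alpha^*_{m+1}>0$, optimality forces $\phi(m+1)\leq\phi(k)$, a contradiction; hence $m<k$ or $\alpha^*_m=1$, and the minimality of $m$ disposes of the residual case $m\geq k$, $\alpha^*_m=1$, $m>0$. Your proposal asserts that ``$\phi$ is minimized at some index $\leq k$'' but supplies none of these estimates, and without them the index bookkeeping you worry about ($m\leq k$ versus $m\leq k-1$) cannot be resolved either. As written, the proof is a plan rather than a proof.
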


\begin{proof}
Let $(\nu^*, \alpha^*)$ be an optimal solution to $\vW(\bar y)$ where
$\alpha^*$ is of the form~\eqref{E:alphaform} for some $0\leq m\leq N$.
Further assume that $\alpha^*$ is chosen so that the value of $m$ is minimal among all such optimal solutions to $\vW(\bar y)$.

In order to simplify the notations in the following,
we denote 
\[
 \tmpvec:=\alpha^*_m (-\vx{m}+\frac{1}{L}\vg{m})+(1-\alpha^*_m) (-\vx{m+1}+\frac{1}{L}\vg{m+1}).
\]
Now, suppose that $\alpha^*$ is of the form  \eqref{E:alphaform} with $m \geq k$ and $\alpha^*_{m}<1$,
then from $\alpha^*_{m}<1$, \eqref{E:maxdot}, and \eqref{E:zetacondition} we have
\begin{equation}\label{E:strictineq}
	\langle \vg{k}, \tmpvec \rangle > L(\zeta_k -\zeta_{k+1}).
\end{equation}
In addition, the closed-form solution for $\nu^*$ \eqref{E:nuoptcond} implies that
\begin{equation}\label{E:lemmaoneIterationcondition}
	\nabla_\nu  \vw(\bar y,\nu^*, \alpha^*) = \bar y+ \nu^*+\tmpvec  \in \real^{N+1}_+,
\end{equation}
and, from the condition on $\bar y_k$, we have
\begin{equation}\label{E:lemmaoneIterationconditionb}
	\langle \vg{k}, \bar y+ \nu^*\rangle = L \sqrt{\zeta_k-\zeta_{k+1}}(\bar y_k +\nu^*_k) \geq 0.
\end{equation}

Combining these results we get
\begin{align*}
& \frac{\partial \vw(\bar y,\nu^*, \alpha^*)}{\partial \alpha_{k}}  = \langle -\vx{k}+\frac{1}{L}\vg{k}, \bar y+ \nu^*+\tmpvec \rangle  +\vf{k} -\frac{1}{2L} \| \vg{k}\|^2 \\
& = \langle \sum_{j=0}^{k-1} \frac{\zeta_j -\zeta_{k+1}}{\zeta_j -\zeta_{j+1}} \vg{j}, \bar y+ \nu^*+\tmpvec \rangle +\langle \vg{k}, \bar y+ \nu^* + \tmpvec\rangle +L \zeta_{k+1} \\
& = \langle \sum_{j=0}^{k-1} \frac{\zeta_j -\zeta_{k+1}}{\zeta_j -\zeta_{j+1}} \vg{j}, \bar y+ \nu^*+\tmpvec \rangle +\langle \vg{k}, \bar y+ \nu^* + \tmpvec\rangle + L\frac{\zeta_{k+1} -\zeta_{m+2}}{\zeta_k -\zeta_{k+1}} (\zeta_k -\zeta_{k+1}) + L \zeta_{m+2} \\
& < \langle \sum_{j=0}^{k-1} \frac{\zeta_j -\zeta_{k+1}}{\zeta_j -\zeta_{j+1}} \vg{j}, \bar y+ \nu^*+\tmpvec \rangle +\langle \vg{k}, \bar y+ \nu^* + \tmpvec\rangle + \frac{\zeta_{k+1} -\zeta_{m+2}}{\zeta_k -\zeta_{k+1}} \langle \vg{k}, \tmpvec\rangle +L \zeta_{m+2} \\
& \leq \langle \sum_{j=0}^{k-1} \frac{\zeta_j -\zeta_{k+1}}{\zeta_j -\zeta_{j+1}} \vg{j}, \bar y + \nu^*+\tmpvec\rangle +\frac{\zeta_k -\zeta_{m+2}}{\zeta_k -\zeta_{k+1}}\langle \vg{k}, \bar y + \nu^*\rangle +\frac{\zeta_k -\zeta_{m+2}}{\zeta_k -\zeta_{k+1}}\langle \vg{k}, \tmpvec\rangle +L \zeta_{m+2} \\
& \leq \langle \sum_{j=0}^{k} \frac{\zeta_j -\zeta_{m+2}}{\zeta_j -\zeta_{j+1}} \vg{j}, \bar y+ \nu^*+\tmpvec \rangle +L \zeta_{m+2} \\
& \leq \langle \sum_{j=0}^{m+1} \frac{\zeta_j -\zeta_{m+2}}{\zeta_j -\zeta_{j+1}} \vg{j}, \bar y+ \nu^*+\tmpvec \rangle +\vf{m+1} -\frac{1}{2L} \| \vg{m+1}\|^2 \\
& = \frac{\partial \vw(\bar y,\nu^*, \alpha^*)}{\partial \alpha_{m+1}},
\end{align*}
where the strict inequality follows from~\eqref{E:strictineq} and the following inequalities follow from \eqref{E:lemmaoneIterationconditionb}, 
\eqref{E:zetacondition},
and \eqref{E:lemmaoneIterationcondition}, respectively.
By the first-order optimality conditions \eqref{E:forderoptalpha} it then follows that $\alpha^*_{m+1}=0$, i.e., $\alpha_{m}=1$, reaching a contradiction.
We therefore conclude that either $m < k$ or $\alpha^*_{m}=1$.

We are left with the following three cases:
\begin{itemize}
\item $m < k$: Here $\alpha^*$ is in the claimed form.
\item $m =  k = 0$ and $\alpha^*_{0}=1$: Again, $\alpha^*$ is in the required form.
\item $m>0$, $m \geq  k$ , and $\alpha^*_{m}=1$: This case is not consistent with the minimality of $m$ since it is possible to 
represent $\alpha^*$ using a smaller value of $m$ by
taking $m'=m-1$ with $\alpha^*_{m-1}=0$.
\end{itemize}
We conclude that for all valid cases $\alpha^*$ is in the desired form.
\end{proof}

%As an immediate result of the previous lemma, 
The following two corollaries will be the main building blocks of the proof in the next section. %, establishing the worst-case behavior of first-order methods.
\begin{corollary}\label{C:minvalue}
Suppose $\bar y=(\bar y_0,\dots,\bar y_N)$ is a vector in $\real^{N+1}$ such that $\bar y_k\geq 0$ for some $k\in \{0,\dots,N\}$,
then
\[
	\vW(\bar y) - {\vW}^* \geq \vf{k} % = \frac{L}{2}(\zeta_i + \zeta_{i+1}).
\]
\end{corollary}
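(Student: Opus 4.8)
The plan is to bypass the optimization machinery of the previous lemmas and argue directly from convexity. By Corollary~\ref{C:vwintarpolating} the function $\vW$ is convex and differentiable on all of $\real^{N+1}$, it interpolates $\mathcal{T}^\zeta$ (so in particular $\vW(\vx{k})=\vf{k}$ and $\nabla\vW(\vx{k})=\vg{k}$), and its minimum value is ${\vW}^*=\vf{N+1}=0$. Hence the inequality to be proved is equivalent to $\vW(\bar y)\geq\vf{k}$, and the whole point will be to exploit that $\vg{k}=\nabla\vW(\vx{k})$ points along the single coordinate direction $\ee{k}$.

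Concretely, I would first apply the gradient inequality for the convex differentiable function $\vW$ at the point $\vx{k}$, evaluated at $\bar y$, to get
\[
  \vW(\bar y)\;\geq\;\vW(\vx{k})+\langle\nabla\vW(\vx{k}),\bar y-\vx{k}\rangle\;=\;\vf{k}+\langle\vg{k},\bar y-\vx{k}\rangle .
\]
This reduces the statement to showing $\langle\vg{k},\bar y-\vx{k}\rangle\geq 0$. For that I would read off from the explicit formulas~\eqref{E:wcfinterpolation}: since $k\in\{0,\dots,N\}$ we have $\vg{k}=L\sqrt{\zeta_k-\zeta_{k+1}}\,\ee{k}$, a nonnegative multiple of the $k$-th coordinate vector (the coefficient being strictly positive by~\eqref{E:zetacondition}), whereas $\vx{k}=-\sum_{j=0}^{k-1}\frac{\zeta_j-\zeta_{k+1}}{\sqrt{\zeta_j-\zeta_{j+1}}}\,\ee{j}$ has vanishing $k$-th coordinate. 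Consequently $\langle\vg{k},\vx{k}\rangle=0$ and $\langle\vg{k},\bar y\rangle=L\sqrt{\zeta_k-\zeta_{k+1}}\,\bar y_k\geq 0$, the last step using exactly the hypothesis $\bar y_k\geq 0$. Plugging this back in yields $\vW(\bar y)\geq\vf{k}$, and subtracting ${\vW}^*=0$ gives the claim. (The identity $\langle\vg{k},\vx{k}\rangle=0$ can alternatively be extracted from~\eqref{E:maxdot} together with $\tfrac1L\|\vg{k}\|^2=L(\zeta_k-\zeta_{k+1})$, if one prefers not to unwind the coordinate expressions.)

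I do not expect a genuine obstacle: the only thing requiring care is the sign bookkeeping, namely checking that $\vg{k}$ lies in the one coordinate direction in which $\vx{k}$ has no component and in which $\bar y$ is assumed nonnegative, so that the correction term in the gradient inequality is automatically nonnegative. One could also derive the bound from Theorem~\ref{T:oneIteration}, since it guarantees an optimal $(\nu^*,\alpha^*)$ for $\vW(\bar y)$ with $\alpha^*$ supported on $\{0,\dots,k\}$, and then bound $\vw(\bar y,\nu^*,\alpha^*)$ from below using~\eqref{E:fnormaval} and monotonicity of $\zeta$; but that route needs a little more care with the quadratic term, whereas the convexity argument above is self-contained and free of case analysis.
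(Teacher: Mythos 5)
Your proof is correct, and it takes a genuinely different and more economical route than the paper's. You invoke only Corollary~\ref{C:vwintarpolating} (convexity, differentiability, and the interpolation conditions $\vW(\vx{k})=\vf{k}$, $\nabla\vW(\vx{k})=\vg{k}$) together with the global gradient inequality at $\vx{k}$; the correction term $\langle\vg{k},\bar y-\vx{k}\rangle=L\sqrt{\zeta_k-\zeta_{k+1}}\,\bar y_k$ is nonnegative exactly because $\vg{k}$ is a positive multiple of $\ee{k}$, $\vx{k}$ has no $\ee{k}$-component, and $\bar y_k\geq 0$ by hypothesis. The paper instead first treats $\bar y$ in the subspace $\realkn$ (where $\vx{k}$ minimizes $\vW$ because $\vg{k}$ is orthogonal to that subspace --- essentially your gradient inequality restricted to the subspace) and then reduces the general case to it by invoking Theorem~\ref{T:oneIteration} to obtain an optimal $(\nu^*,\alpha^*)$ with $\alpha^*$ supported on $\{0,\dots,k\}$, followed by an explicit coordinate-dropping manipulation of $\vw$. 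Your argument avoids Theorem~\ref{T:oneIteration} entirely and even yields the slightly stronger bound $\vW(\bar y)\geq\vf{k}+L\sqrt{\zeta_k-\zeta_{k+1}}\,\bar y_k$; the only thing the paper's longer route buys here is that it exercises the structural machinery that is in any case required for Corollary~\ref{C:zerodrivative}, where a statement about the gradient (rather than the value) of $\vW$ is needed and a plain convexity inequality does not suffice.
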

\begin{proof}
%First, consider the case $\bar y_k=\dots=\bar y_N=0$.
% First, consider the case $\bar y \in \real^{k,N+1}$
Denote\phantomsection\label{P:realkn}
\[
	\realkn := \{ (y_0,\dots,y_N)\in \real^{N+1} : y_k=\dots= y_N=0\},
\]
and consider the case $\bar y \in \real^{k,N+1}$.
% Since $\vx{k}\in \real^{k,N+1}$ and 
Since for all $v\in \real^{k,N+1}$ we have
$\langle \nabla \vW(\vx{k}), v\rangle = \langle \vg{k}, v\rangle = L\sqrt{\zeta_i -\zeta_{i+1}} \langle  e_{k+1}, v\rangle=0$, % then by the first-order optimality conditions we get that 
then clearly $\vx{k}$ minimizes $\vW(\bar y)$ over $\real^{k,N+1}$,
%we get that $\vx{k}$ is a minimizer of $\vW$ over $\real^{k,N+1}$, 
and we get that $\vW(\bar y)- {\vW}^*=\vW(\bar y) \geq \vW(\vx{k})= \vf{k}$.

For the general case $\bar y \in \real^{N+1}$, let $(\nu^*, \alpha^*)$ be an optimal solution to $\vW(\bar y)$
such that $\alpha^*_{k+1}=\dots=\alpha^*_{N+1}=0$ (by Theorem~\ref{T:oneIteration} such a solution exsits),
and denote $\hat y=(\bar y_0,\dots,\bar y_{k-1}, 0,\dots,0)$,
$\hat \nu^* = (\nu^*_0,\dots,\nu^*_{k-1}, 0,\dots,0)$. We get
\begin{align*}
	& \vW(\bar y)- {\vW}^* =  \vW(\bar y) = \vw(\bar y,\nu^*, \alpha^*)  \\
	& = \frac{L}{2} \|\bar y+\nu^*-\sum_{i=0}^k \alpha^*_i  (\vx{i}-\frac{1}{L}\vg{i}) \|^2 +\sum_{i=0}^k \alpha^*_i (\vf{i} -\frac{1}{2L} \| \vg{i}\|^2) \\
	& = \frac{L}{2} \|\hat y+\hat \nu^*+ \sum_{i=k}^N (\bar y_i+\nu^*_i) \ee{i}-\sum_{i=0}^k \alpha^*_i  (\vx{i}-\frac{1}{L}\vg{i}) \|^2  +\sum_{i=0}^k \alpha^*_i (\vf{i} -\frac{1}{2L} \| \vg{i}\|^2)  \\
	& = \frac{L}{2} \|\hat y+\hat \nu^*+(\bar y_k+\nu^*_k) \ee{k}-\sum_{i=0}^k \alpha^*_i  (\vx{i}-\frac{1}{L}\vg{i}) \|^2  + \sum_{i=k+1}^N (\bar y_i + \nu^*_i)^2 +\sum_{i=0}^k \alpha^*_i (\vf{i} -\frac{1}{2L} \| \vg{i}\|^2)  \\
	% & = \frac{L}{2} \|\hat y+\hat \nu^*-\sum_{i=0}^k \alpha^*_i  (\vx{i}-\frac{1}{L}\vg{i}) \|^2 + L \langle (\bar y_k+\nu^*_k) \ee{k}, \hat y+\hat \nu^*-\sum_{i=0}^k \alpha^*_i  (\vx{i}-\frac{1}{L}\vg{i}) \rangle \\&\quad + \frac{L}{2} (\bar y_k+\nu^*_k)^2 + \sum_{i=k+1}^N (\bar y_i + \nu^*_i)^2 +\sum_{i=0}^k \alpha^*_i (\vf{i} -\frac{1}{2L} \| \vg{i}\|^2)  \\	
	& \geq \frac{L}{2} \|\hat y+\hat \nu^*-\sum_{i=0}^k \alpha^*_i  (\vx{i}-\frac{1}{L}\vg{i}) \|^2  + \sum_{i=k+1}^N (\bar y_i + \nu^*_i)^2 +\sum_{i=0}^k \alpha^*_i (\vf{i} -\frac{1}{2L} \| \vg{i}\|^2)  \\
	& \geq  \vw(\hat y,\hat \nu^*, \alpha^*) \geq \vW(\hat y) \geq \vf{k},
\end{align*}
where the first inequality follows from $\bar y_k+\nu^*_k \geq 0$ and \eqref{E:xiequiv}.
\end{proof}

\begin{corollary}\label{C:zerodrivative}
Suppose $\bar y=(\bar y_0,\dots,\bar y_N)$ is a vector such that $\bar y_k\geq 0$ and $\bar y_n=0$ for some $0\leq k < n\leq N$,
then
\[
	\frac{\partial}{\partial y_n} \vW(\bar y) = 0.
\]
\end{corollary}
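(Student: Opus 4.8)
\medskip

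The plan is to reduce the claim to Corollary~\ref{C:minvalue} by a perturbation/monotonicity argument in the $y_n$ coordinate. The key observation is that $\vW$ is differentiable (Theorem~\ref{T:basicprop}), so $\frac{\partial}{\partial y_n}\vW(\bar y)$ is a genuine number, and by convexity it suffices to show the one-sided inequalities $\frac{\partial}{\partial y_n}\vW(\bar y)\le 0$ and $\frac{\partial}{\partial y_n}\vW(\bar y)\ge 0$, or equivalently that $\bar y$ is a minimizer of the restriction $t\mapsto\vW(\bar y+t\,\ee{n})$. First I would invoke Theorem~\ref{T:oneIteration}, using the hypothesis $\bar y_k\ge 0$: there is an optimal solution $(\nu^*,\alpha^*)$ to $\vW(\bar y)$ with $\alpha^*$ of the form~\eqref{E:alphaform} and $\alpha^*_{k+1}=\dots=\alpha^*_{N+1}=0$, so in particular $\alpha^*_i=0$ for all $i>k$, hence for all $i\ge n$ since $n>k$.

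\medskip

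The main computation is then to differentiate $\vW$ in the direction $\ee{n}$ using this optimal solution. Since $\vW(\bar y+t\,\ee{n})\le\vw(\bar y+t\,\ee{n},\nu^*,\alpha^*)$ for every $t$, with equality at $t=0$, the derivative is $\frac{\partial}{\partial y_n}\vW(\bar y)=\frac{\partial}{\partial y_n}\vw(\bar y,\nu^*,\alpha^*)$, which from the definition of $\vw$ equals $L\bigl\langle\ee{n},\,\bar y+\nu^*-\sum_{i\le k}\alpha^*_i(\vx{i}-\frac1L\vg{i})\bigr\rangle$. Now the $n$-th coordinate of $\bar y$ is $0$ by hypothesis; the $n$-th coordinate of $\vx{i}-\frac1L\vg{i}$ vanishes for every $i\le k<n$ by~\eqref{E:xiequiv} together with the fact that $\vg{j}\in\linspan\{\ee{j}\}$ (so the sum $\sum_{j\le i}\frac{\zeta_j-\zeta_{i+1}}{\zeta_j-\zeta_{j+1}}\vg{j}$ has support in coordinates $0,\dots,i\le k<n$); hence this inner product reduces to $L\nu^*_n\ge 0$, giving $\frac{\partial}{\partial y_n}\vW(\bar y)\ge 0$.

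\medskip

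For the reverse inequality, I would argue that $\bar y$ minimizes $\vW$ along the line $\bar y+t\,\ee{n}$. The natural route is to compare with the value produced by the feasible point $(\nu^*-t\,\ee{n},\alpha^*)$ when $t\ge 0$ small: replacing $\nu^*_n$ by $\nu^*_n-t$ keeps the term $\bar y_n+\nu^*_n-(\dots)_n = \nu^*_n$ inside the squared norm unchanged (since $\bar y_n+t$ is shifted back by $-t$), so $\vw(\bar y+t\,\ee{n},\nu^*-t\,\ee{n},\alpha^*)=\vw(\bar y,\nu^*,\alpha^*)=\vW(\bar y)$, provided $\nu^*-t\,\ee{n}\in\real^{N+1}_+$, i.e.\ as long as $0\le t\le\nu^*_n$. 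Thus $\vW(\bar y+t\,\ee{n})\le\vW(\bar y)$ for small $t\ge 0$, forcing the one-sided derivative from the right to be $\le 0$; combined with the previous paragraph this pins the derivative to $0$. The one subtlety, and the step I expect to need the most care, is the case $\nu^*_n=0$: then the above perturbation is only available for $t\le 0$, and I would instead appeal directly to Corollary~\ref{C:minvalue}. Specifically, for $t\le 0$ the point $\bar y+t\,\ee{n}$ still has its $k$-th coordinate $\ge 0$, so $\vW(\bar y+t\,\ee{n})-\vW^*\ge\vf{k}$; but the chain of inequalities in the proof of Corollary~\ref{C:minvalue} in fact shows $\vW(\bar y)-\vW^* = \vf{k}$ is \emph{attained} here precisely when $\nu^*_n=\bar y_n=0$ (the inequality "$\geq\vf{k}$" there becomes tight because all the dropped nonnegative terms $\sum_{i=k+1}^N(\bar y_i+\nu^*_i)^2$ and the contribution of the $k$-th coordinate vanish). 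Hence $\bar y$ is a global minimizer of $t\mapsto\vW(\bar y+t\,\ee{n})$ on $t\le 0$, so the left derivative is also $\ge 0$, and together with $\le 0$ from differentiability and convexity (the right derivative dominates the left for a convex function, and the right derivative is $\le 0$ by... ) the derivative is $0$. I would streamline this by simply noting that differentiability plus "$\bar y$ minimizes $\vW$ in the $\ee{n}$ direction on at least one side" combined with the gradient sign already computed yields the result; the cleanest write-up computes $\frac{\partial}{\partial y_n}\vW(\bar y)=L\nu^*_n$ and separately shows $\nu^*_n=0$ must hold by exhibiting the alternative optimal solution $(\nu^*-\nu^*_n\ee{n},\alpha^*)$ for the problem $\vW(\bar y)$ itself, which is feasible and attains the same value (same argument as case (iii) of Lemma~\ref{L:alphaform}), so by~\eqref{E:nuoptcond} the optimal $\nu$ is unique and $\nu^*_n=0$.
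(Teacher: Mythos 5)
Your final, streamlined argument is exactly the paper's proof: take the optimal solution from Theorem~\ref{T:oneIteration}, write $\frac{\partial}{\partial y_n}\vW(\bar y)=L\langle \bar y+\nu^*-\sum_{i\le k}\alpha^*_i(\vx{i}-\frac{1}{L}\vg{i}),\ee{n}\rangle$, kill the sum by the support observation from \eqref{E:xiequiv}, use $\bar y_n=0$, and obtain $\nu^*_n=0$ from the projection formula \eqref{E:nuoptcond} (the $n$-th coordinate of the point being projected is $0$, so its projection onto the nonnegative orthant is $0$). The long detour through one-sided derivatives is unnecessary, and one claim in it is false: the chain of inequalities in Corollary~\ref{C:minvalue} is not tight merely because $\nu^*_n=\bar y_n=0$ (the final step $\vW(\hat y)\ge \vf{k}$ retains slack in general), so the assertion that $\bar y$ globally minimizes $t\mapsto\vW(\bar y+t\,\ee{n})$ on $t\le 0$ does not follow from it. Likewise, $(\nu^*-\nu^*_n\ee{n},\alpha^*)$ attains a strictly \emph{smaller} value than $(\nu^*,\alpha^*)$ when $\nu^*_n>0$, not the same value --- which is precisely why optimality (equivalently, \eqref{E:nuoptcond}) forces $\nu^*_n=0$. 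Since you explicitly discard the detour and the correct mechanism ($\bar y_n=0$, the support argument, and \eqref{E:nuoptcond}) is fully in place, the proof stands.
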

\begin{proof}
Let $(\nu^*, \alpha^*)$ be an optimal solution of $\vW(\bar y)$ in the form guaranteed by Theorem~\ref{T:oneIteration},
then
\[
	\nabla \vW (\bar y) = L  \left(\bar y+\nu^*-\sum_{i=0}^{k} \alpha^*_i  (\vx{i}-\frac{1}{L}\vg{i})\right),
\]
and
we get
\begin{align*}
	& \frac{\partial}{\partial y_n} \vW(\bar y)
	% \\= L \langle \bar y+\nu^*-\sum_{i=0}^{N+1} \alpha^*_i  (\vx{i}-\frac{1}{L}\vg{i}), \ee{n} \rangle \\
	= L \langle \bar y+\nu^*-\sum_{i=0}^{k} \alpha^*_i  (\vx{i}-\frac{1}{L}\vg{i}), \ee{n} \rangle = 0,
\end{align*}
where the last equality follows from $\bar y_n=0$,  
\eqref{E:nuoptcond},
and since $\langle \vx{i}-\frac{1}{L}\vg{i}, \ee{n}\rangle=0$ for any $i<n$, 
\end{proof}

\begin{remark}
As an immediate result of Corollary~\ref{C:zerodrivative}, it follows that
\[
  y\in \realkn \Rightarrow \nabla \vW(y)\in\real^{k+1,N+1}, \quad k=0,\dots,N.
\]
Using a simple inductive argument, we get that for any sequence $x_0,\dots,x_N\in \real^{N+1}$ 
starting at $x_0=0$ and satisfying (c.f.,~\cite[Assumption~2.1.4]{nest-book-04})
\begin{equation}\label{E:nestassumption}
	x_k \in \linspan\{ \nabla \vW(x_0),\dots, \nabla \vW(x_{k-1})\}, \quad k=1,\dots,N,
\end{equation}
we have $x_k\in \realkn$, $k=0,\dots,N$,
and therefore, by Corollary~\ref{C:minvalue}, we get that the inequality
\begin{equation}\label{E:notelowerbound}
	\vW(x_k)-{\vW}^* \geq \vf{k}, \quad k=0,\dots,N
\end{equation}
holds for any such sequence.
We conclude that
the worst-case absolute inaccuracy 
of any first-order method which generates sequences that satisfy~\eqref{E:nestassumption}
cannot be lower than the RHS of~\eqref{E:notelowerbound},
i.e., this inequality forms a lower-bound on the efficiency estimate of such methods.
As noted by Nesterov~\cite[Page~59]{nest-book-04}, this 
limitation on the structure of the first-order methods for which this bound is applicable can avoided by some additional reasoning, 
however, this requires introducing some additional assumptions, e.g.,
that the function $\vW$ can be embedded in a vector space whose dimension, $d$, is at least $d\geq 2N+1$.
By taking advantage of the properties derived above, we show in the next section that the
lower bound \eqref{E:notelowerbound} applies for sequences generated by any first-order method for the case $d\geq N+1$.
\end{remark}

\medskip

At this stage, the exact value of $\zeta$ has not yet been defined.
In order to find an instance of $\zeta$ which yields the best 
lower complexity bounds on first-order methods,
one needs to 
maximize $\vf{N}$ while keeping $\|\vx{N+1}\|$ bounded.
Solving this problem, we obtain the following solution instance
(we skip the optimality proof, as it follows from the tightness of the resulting bound):
\begin{definition}
Suppose $R>0$ and $N\in \mathbb{N}$ are given. 
We define $\sz=(\sz_0,\dots,\sz_{N+2})$ by
\begin{equation}\label{E:szdef}
\begin{aligned}
	& \sz_i=\frac{2\theta_i}{2\theta_i-1}\sz_{i+1}, \quad i=0,\dots,N-1,\\
	& \sz_N = \frac{\theta_N}{\theta_N-1}\sz_{N+1}, \\
	& \sz_{N+1} = \frac{\theta_N-1}{\theta_N^2(2\theta_N-1)} R^2, \\
	& \sz_{N+2} = 0,
\end{aligned}
\end{equation}
where $\theta_i$ is defined as in \eqref{E:thetadef}.
\end{definition}

\begin{lemma}\label{L:xstardist}
Consider \eqref{E:wcfinterpolation}, taking $\zeta=\sz$, then
\begin{align*}
	& \|\sx{N+1}\|=R \quad \text{and} \quad \sff{N} = \frac{LR^2}{2\theta_N^2}.
\end{align*}
\end{lemma}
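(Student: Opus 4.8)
The plan is to compute $\|\sx{N+1}\|^2$ and $\sff{N}$ directly from the definitions, using the recursive structure of $\sz$ and the $\theta$-recursion \eqref{E:thetadef}. First I would unwind the recursion for $\sz$. From \eqref{E:szdef} we have $\sz_i/\sz_{i+1} = 2\theta_i/(2\theta_i-1)$ for $i<N$ and $\sz_N/\sz_{N+1} = \theta_N/(\theta_N-1)$. Hence for each $j$,
\[
	\sz_j - \sz_{j+1} = \frac{\sz_{j+1}}{2\theta_j-1}, \qquad j=0,\dots,N-1, \qquad \sz_N - \sz_{N+1} = \frac{\sz_{N+1}}{\theta_N-1}.
\]
The key simplification will come from recognizing that, by the definition of $\vx{N+1}$ in \eqref{E:wcfinterpolation} with $\zeta_{N+2}=0$,
\[
	\|\sx{N+1}\|^2 = \sum_{j=0}^{N} \frac{\sz_j^2}{\sz_j-\sz_{j+1}} = \sum_{j=0}^{N-1} (2\theta_j-1)\frac{\sz_j^2}{\sz_{j+1}} + (\theta_N-1)\frac{\sz_N^2}{\sz_{N+1}}.
\]

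Next I would substitute $\sz_j = \frac{2\theta_j}{2\theta_j-1}\sz_{j+1}$ (resp.\ $\sz_N = \frac{\theta_N}{\theta_N-1}\sz_{N+1}$) into each term to get a sum of the form $\sum_j c_j \sz_{j+1}$ where $c_j$ involves only $\theta_j$. The term for $j<N$ becomes $\frac{(2\theta_j)^2}{2\theta_j-1}\sz_{j+1}$ and the last term becomes $\frac{\theta_N^2}{\theta_N-1}\sz_{N+1}$. I then express every $\sz_{j+1}$ in terms of $\sz_{N+1}$ via the telescoping product $\sz_{j+1} = \sz_{N+1}\prod_{\ell=j+1}^{N-1}\frac{2\theta_\ell}{2\theta_\ell-1}\cdot\frac{\theta_N}{\theta_N-1}$. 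The hoped-for telescoping identity, which is the crux of the computation, is that
\[
	\|\sx{N+1}\|^2 = \sz_{N+1}\cdot \frac{\theta_N^2(2\theta_N-1)}{\theta_N-1},
\]
which together with the stated value $\sz_{N+1} = \frac{\theta_N-1}{\theta_N^2(2\theta_N-1)}R^2$ immediately gives $\|\sx{N+1}\|^2 = R^2$. To verify this telescoping I would use the defining relations $\theta_i^2 = \theta_i + \theta_{i-1}^2$ for $i<N$ (equivalently $\theta_{i-1}^2 = \theta_i^2-\theta_i = \theta_i(\theta_i-1)$, from \eqref{E:thetadef}) and $\theta_N^2 = 2\theta_N + 2\theta_{N-1}^2 - \theta_N$... more precisely $\theta_N^2 - \theta_N = 2\theta_{N-1}^2$, i.e.\ $2\theta_{N-1}^2 = \theta_N(\theta_N-1)$; these are exactly the relations that make the partial sums collapse. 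I expect the cleanest route is to prove by downward induction on $m$ that $\sum_{j=m}^{N-1}\frac{(2\theta_j)^2}{2\theta_j-1}\sz_{j+1} + \frac{\theta_N^2}{\theta_N-1}\sz_{N+1}$ equals a single closed-form multiple of $\sz_{m+1}$ (namely $\frac{(2\theta_m)^2 \theta_m}{\ldots}$-type expression), using $\theta_{m}(\theta_m-1) = \theta_m^2-\theta_m$ and the recursion to pass from step $m+1$ to $m$.

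For the second identity, by \eqref{E:wcfinterpolation}, $\sff{N} = \frac{L}{2}(\sz_N + \sz_{N+1})$, and using $\sz_N = \frac{\theta_N}{\theta_N-1}\sz_{N+1}$ we get $\sff{N} = \frac{L}{2}\cdot\frac{2\theta_N-1}{\theta_N-1}\sz_{N+1} = \frac{L}{2}\cdot\frac{2\theta_N-1}{\theta_N-1}\cdot\frac{\theta_N-1}{\theta_N^2(2\theta_N-1)}R^2 = \frac{LR^2}{2\theta_N^2}$, as claimed — this part is a one-line substitution with no obstacle. The main obstacle is therefore purely the telescoping/inductive identity for $\|\sx{N+1}\|^2$; everything else is routine substitution. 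I would organize the write-up as: (1) record the consequences of \eqref{E:thetadef} as $\theta_{i-1}^2 = \theta_i(\theta_i-1)$ for $i\le N-1$ and $2\theta_{N-1}^2 = \theta_N(\theta_N-1)$; (2) establish the closed form for the partial sums by induction; (3) plug in $\sz_{N+1}$; (4) the trivial computation of $\sff{N}$.
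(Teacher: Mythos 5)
Your plan follows the same route as the paper's proof: reduce $\|\sx{N+1}\|^2$ to $\sum_{j=0}^{N-1}2\theta_j\sz_j+\theta_N\sz_N$ via the difference formulas for consecutive $\sz_j$, then collapse this sum using $\theta_{i-1}^2=\theta_i(\theta_i-1)$ for $i\le N-1$ and $2\theta_{N-1}^2=\theta_N(\theta_N-1)$; your target identity $\|\sx{N+1}\|^2=\frac{\theta_N^2(2\theta_N-1)}{\theta_N-1}\sz_{N+1}$ is the correct one, and your computation of $\sff{N}$ is complete and matches the paper. The one step that would not go through as written is the proposed \emph{downward} induction: you claim the tail sums $\sum_{j=m}^{N-1}\frac{(2\theta_j)^2}{2\theta_j-1}\sz_{j+1}+\frac{\theta_N^2}{\theta_N-1}\sz_{N+1}$ equal ``a single closed-form multiple of $\sz_{m+1}$.'' They are trivially scalar multiples of $\sz_{m+1}$ (all the $\sz_j$ are proportional to one another), but the multiplier does not simplify to a single product or a $(2\theta_k-1)$-type expression for intermediate $m$: e.g., for $N=2$ and $m=1$ the tail equals $\theta_2\sz_2\bigl(1+\frac{2\theta_2-2}{2\theta_1-1}\bigr)$, which collapses no further. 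The telescoping is seeded by $2\theta_0-1=1$ at the \emph{bottom} index, so the induction must be run upward: setting $A_k:=1+\sum_{j=0}^{k-1}\prod_{i=j}^{k-1}\frac{2\theta_{i+1}-2}{2\theta_i-1}$, one has $A_k=\frac{2\theta_k-2}{2\theta_{k-1}-1}A_{k-1}+1$ and hence $A_k=2\theta_k-1$ by induction from $A_1=2\theta_1-1$, giving $\|\sx{N+1}\|^2=\theta_N\sz_N(2\theta_N-1)=\frac{\theta_N^2(2\theta_N-1)}{\theta_N-1}\sz_{N+1}=R^2$. This is exactly how the paper organizes the computation, via the identity $\sz_j\theta_j=\frac{\sz_N\theta_N}{2}\prod_{i=j}^{N-1}\frac{2\theta_{i+1}-2}{2\theta_i-1}$ followed by the nested (inside-out) telescoping; once you reverse the direction of your induction, your argument is complete.
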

\begin{proof}

Using the definition of $\sz_j$ and the relation
\begin{align*}
	& \theta_{i-1}^2 = \theta_i (\theta_i-1), \quad i=1,\dots,N-1,
\end{align*}
for $j=0,\dots, N-1$ we have
\begin{align*}
  & \sz_j \theta_j = \sz_N \theta_j \prod_{i=j}^{N-1} \frac{2\theta_i}{2\theta_i-1}  \\
  & = \sz_N\frac{2\theta_j^2}{2\theta_j-1} \prod_{i=j+1}^{N-1} \frac{2\theta_i}{2\theta_i-1} =\sz_N \frac{2 \theta_{j+1} -2}{2\theta_j-1} \theta_{j+1} \prod_{i=j+1}^{N-1} \frac{2\theta_i}{2\theta_i-1}\\
  & = \sz_N \frac{2 \theta_{j+1} - 2}{2\theta_j-1} \frac{2\theta_{j+1}^2}{2\theta_{j+1}-1} \prod_{i=j+2}^{N-1} \frac{2\theta_i}{2\theta_i-1} = \sz_N \frac{2 \theta_{j+1} - 2}{2\theta_j-1} \frac{2 \theta_{j+2} -2}{2\theta_{j+1}-1} \theta_{j+2} \prod_{i=j+2}^{N-1} \frac{2\theta_i}{2\theta_i-1} \\
  & = \dots = \sz_N \prod_{i=j}^{N-2} \frac{2 \theta_{i+1} - 2}{2\theta_i-1} \cdot \theta_{N-1} \frac{2\theta_{N-1}}{2\theta_{N-1}-1},
\end{align*}
then, using $\theta_{N-1}^2 = \frac{1}{2}\theta_{N} (\theta_{N}-1)$, we reach
\begin{align}
 \sz_j \theta_j = \frac{\sz_N \theta_N}{2} \prod_{i=j}^{N-1} \frac{2\theta_{i+1}-2}{2\theta_i-1}, \quad j=0,\dots, N-1. \label{E:szthetaid}
\end{align}
Next, from the definition of $\sx{N+1}$, \eqref{E:szdef}, and \eqref{E:szthetaid} we get
\begin{align*}
	& \|\sx{N+1}\|^2 =\sum_{j=0}^{N} \frac{(\sz_j)^2 }{\sz_j -\sz_{j+1}}=\sum_{j=0}^{N-1} \frac{\sz_j }{1 -\frac{2\theta_i-1}{2\theta_i}} +\frac{\sz_N}{1 -\frac{\theta_N-1}{\theta_N}} = \sum_{j=0}^{N-1} 2 \sz_j\theta_j +  \sz_N\theta_N \\
	& = \sum_{j=0}^{N-1} \sz_N  \theta_N\prod_{i=j}^{N-1} \frac{2\theta_{i+1}-2}{2\theta_i-1} +\sz_N \theta_N \\
	& = \theta_N \sz_N \left(\frac{2\theta_{N}-2}{2\theta_{N-1}-1}\left(\frac{2\theta_{N-1}-2}{2\theta_{N-2}-1}\left(\dots \left( \frac{2\theta_{2}-2}{2\theta_{1}-1} \left(\frac{2\theta_{1}-2}{2\theta_{0}-1}+1\right)+1\right)\dots\right)+1 \right) +1\right),
\end{align*}
which from $2\theta_0-1=1$ turns out to be a telescopic product and reduces to
\begin{align*}
	&  \|\sx{N+1}\|^2 =\theta_N \sz_N (2\theta_{N}-2)= \frac{\theta_N^2(2\theta_N-1)}{\theta_N-1} \sz_{N+1} =R^2.
\end{align*}

For the second part of the claim, we have
\begin{align*}
    & \sff{N} = \frac{L}{2}(\sz_N+\sz_{N+1}) \\
	& = 	\frac{L}{2} \left( \frac{\theta_N}{\theta_N-1}\frac{\theta_N-1}{\theta_N^2(2\theta_N-1)} R^2+ \frac{\theta_N-1}{\theta_N^2(2\theta_N-1)} R^2\right)\\
	& = 	\frac{L R^2}{2} \left( \frac{\theta_N}{\theta_N^2(2\theta_N-1)} + \frac{\theta_N-1}{\theta_N^2(2\theta_N-1)}\right)\\
	& = \frac{L R^2}{2\theta_N^2},
\end{align*}
which establishes the claim.
\end{proof}

\section{The main result}\label{S:lower}
We now build upon the results of the previous sections to derive the exact minimax risk associated with smooth and convex minimization.

We start by introducing a family of functions that
will be used as a basis for proving the main result.
\begin{definition}
Let $N, d\in \mathbb{N}$ be such that $N\leq d$.
For an orthonormal set $\{v_0,\dots,v_N\}\subset \real^d$, let
$W_{v_0,\dots,v_N}:\real^d\rightarrow\real$ be defined by
\[
    W_{v_0,\dots,v_N}(z) := \sW(\langle z, v_0\rangle,\dots,\langle z, v_N\rangle).
\]
\end{definition}
\begin{lemma}\label{L:funcinvariance}
Let $N\leq d$ and
suppose $\{v_0,\dots,v_N\}$ and $\{u_0,\dots,u_N\}$ are two
orthonormal sets of vectors in $\real^d$ with $v_0=u_0,\dots,v_k=u_k$ for some $k\leq N$, 
then for any $z\in \real^d$ such that $z\in \linspan\{v_0,\dots,v_k\}$ and $\langle z, v_k\rangle\geq 0$ we have
\[
 W_{v_0,\dots,v_N}(z)=W_{u_0,\dots,u_N}(z) \quad \text{ and } \quad \nabla W_{v_0,\dots,v_N}(z)=\nabla W_{u_0,\dots,u_N}(z).
\]
\end{lemma}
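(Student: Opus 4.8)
The plan is to reduce the equality of the two functions $W_{v_0,\dots,v_N}$ and $W_{u_0,\dots,u_N}$ at the point $z$ to the equality $\vW(\bar y) = \vW(\bar y')$ for two specific arguments $\bar y, \bar y' \in \real^{N+1}$, and then to show these two arguments produce the same value and gradient of $\vW$ by invoking the structural result Theorem~\ref{T:oneIteration} together with Corollary~\ref{C:zerodrivative}. Concretely, set $\bar y_j := \langle z, v_j\rangle$ and $\bar y'_j := \langle z, u_j\rangle$ for $j=0,\dots,N$. Since $z\in\linspan\{v_0,\dots,v_k\}$ and $v_j = u_j$ for $j\le k$, we have $\bar y_j = \bar y'_j$ for $j=0,\dots,k$, and moreover $\bar y_j = \langle z, v_j\rangle = 0$ for $j=k+1,\dots,N$ because $z$ is orthogonal to $v_{k+1},\dots,v_N$; however $\bar y'_j = \langle z, u_j\rangle$ need not vanish for $j>k$. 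So the two vectors agree in coordinates $0,\dots,k$, the first has zeros in coordinates $k+1,\dots,N$, and the hypothesis $\langle z,v_k\rangle\ge 0$ says $\bar y_k = \bar y'_k \ge 0$.

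The heart of the argument is then a purely finite-dimensional claim about $\vW$: \emph{if $\bar y'_k\ge 0$, then $\vW(\bar y')$ depends only on $\bar y'_0,\dots,\bar y'_k$}, and likewise its gradient's first $k{+}1$ coordinates depend only on $\bar y'_0,\dots,\bar y'_k$ while its last $N{-}k$ coordinates vanish. For the value, I would take the optimal solution $(\nu^*,\alpha^*)$ of $\vW(\bar y')$ guaranteed by Theorem~\ref{T:oneIteration}, which has $\alpha^*_{k+1}=\dots=\alpha^*_{N+1}=0$, and mimic the computation in the proof of Corollary~\ref{C:minvalue}: writing out $\vw(\bar y',\nu^*,\alpha^*)$ and minimizing over the coordinates $\nu^*_{k+1},\dots,\nu^*_N$ (which appear only through the squared-norm term, since $\vx{i}-\tfrac1L\vg{i}\in\real^{k+1,N+1}$ for $i\le k$ by~\eqref{E:xiequiv}), one sees the optimal choice zeroes out those coordinates, so the value equals $\vW$ evaluated at the truncated vector $(\bar y'_0,\dots,\bar y'_{k-1},\bar y'_k,0,\dots,0)$. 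Applying this to both $\bar y'$ and $\bar y$ — which have the same first $k{+}1$ coordinates — gives $\vW(\bar y)=\vW(\bar y')$, i.e. $W_{v_0,\dots,v_N}(z)=W_{u_0,\dots,u_N}(z)$. For the gradient, Corollary~\ref{C:zerodrivative} (and the remark following it) gives $\partial_{y_n}\vW(\bar y')=0$ for $n=k+1,\dots,N$ since $\bar y'_k\ge0$ and $\bar y'_n$ can be assumed $0$ once we pass to the truncation — more directly, using the form of $\nabla\vW(\bar y')=L(\bar y'+\nu^*-\sum_{i=0}^k\alpha^*_i(\vx{i}-\tfrac1L\vg{i}))$, the coordinates $0,\dots,k$ involve only $\bar y'_0,\dots,\bar y'_k$ and the common data, so $\nabla\vW(\bar y')=\nabla\vW(\bar y)$ as vectors in $\real^{N+1}$.

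Finally I would translate back via the chain rule: $\nabla W_{v_0,\dots,v_N}(z)=\sum_{j=0}^N (\partial_{y_j}\vW)(\bar y)\,v_j$ and $\nabla W_{u_0,\dots,u_N}(z)=\sum_{j=0}^N (\partial_{y_j}\vW)(\bar y')\,u_j$. The coordinates $j=0,\dots,k$ contribute equal terms because $(\partial_{y_j}\vW)(\bar y)=(\partial_{y_j}\vW)(\bar y')$ and $v_j=u_j$ there, while the coordinates $j=k+1,\dots,N$ contribute nothing on either side: on the $v$-side because $(\partial_{y_j}\vW)(\bar y)=0$, and on the $u$-side because $(\partial_{y_j}\vW)(\bar y')=0$ by the same argument applied to $\bar y'$. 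Hence the gradients coincide. The main obstacle I anticipate is the bookkeeping in the value computation — carefully justifying that minimizing $\vw(\bar y',\nu^*,\alpha^*)$ over the ``high'' coordinates of $\nu$ truly decouples and that the remaining optimization matches $\vW$ of the truncated vector — but this is exactly the manipulation already carried out in the proof of Corollary~\ref{C:minvalue}, so it should go through verbatim with $\bar y_i$ replaced by $\bar y'_i$ and the inequality there replaced by the corresponding equality (since here we are computing the value, not bounding it).
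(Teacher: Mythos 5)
There is a genuine gap, and it originates in a false premise at the very start of your setup: you assert that $\bar y'_j=\langle z,u_j\rangle$ ``need not vanish for $j>k$.'' In fact it does vanish. Since $u_i=v_i$ for $i\le k$, we have $\linspan\{u_0,\dots,u_k\}=\linspan\{v_0,\dots,v_k\}\ni z$, and since $\{u_0,\dots,u_N\}$ is orthonormal, each $u_j$ with $j>k$ is orthogonal to that span; hence $\langle z,u_j\rangle=0$ for all $j>k$. Consequently $\bar y=\bar y'$ as vectors in $\real^{N+1}$, and the value equality $W_{v_0,\dots,v_N}(z)=W_{u_0,\dots,u_N}(z)$ is immediate --- this one-line observation is exactly the paper's proof of the first assertion, with no appeal to Theorem~\ref{T:oneIteration} needed.

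This matters beyond elegance because the substitute claim you build to compensate --- that $\bar y'_k\ge 0$ implies $W_{\zeta^*}(\bar y')$ depends only on $\bar y'_0,\dots,\bar y'_k$ --- is false in general. Taking the optimal solution from Theorem~\ref{T:oneIteration}, the coordinates $n>k$ of $\sum_{i\le k}\alpha^*_i\bigl(x^{\zeta^*}_i-\tfrac1L g^{\zeta^*}_i\bigr)$ vanish, so the squared-norm term contributes $\tfrac L2\sum_{n>k}\min_{\nu_n\ge 0}(\bar y'_n+\nu_n)^2=\tfrac L2\sum_{n>k}\max(\bar y'_n,0)^2$: the constraint $\nu\in\real^{N+1}_+$ can absorb negative high coordinates but not positive ones, so the value genuinely increases when some $\bar y'_n>0$ with $n>k$. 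The same issue infects your gradient step, since Corollary~\ref{C:zerodrivative} requires $\bar y'_n=0$, which you explicitly declined to assume. Once you record that $\bar y'_n=0$ for $n>k$, everything collapses to the paper's argument: the values agree trivially, and for the gradients the chain rule gives $\nabla W_{v_0,\dots,v_N}(z)=\sum_{n}(\partial_{y_n}W_{\zeta^*})(\bar y)\,v_n$ and similarly for the $u$'s; the terms with $n\le k$ coincide because $v_n=u_n$, and the terms with $n>k$ vanish on both sides by Corollary~\ref{C:zerodrivative}, applicable because $\bar y_k=\bar y'_k\ge 0$ and $\bar y_n=\bar y'_n=0$. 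Your closing paragraph is essentially this, so the repair is small, but as written the proof rests on a false intermediate claim.
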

\begin{proof}
Since $\langle z, u_{k+1}\rangle=\langle z, v_{k+1}\rangle=\dots=\langle z, u_N\rangle=\langle z, v_N\rangle=0$, we get
\begin{align*}
     W_{v_0,\dots,v_N}(z) & = \sW(\langle z, v_0\rangle,\dots,\langle z, v_k\rangle,0,\dots, 0)\\
     &= \sW(\langle z, u_0\rangle,\dots,\langle z, u_k\rangle,0,\dots, 0) = W_{u_0,\dots,u_N}(z),
\end{align*}
which establishes the first part of the claim.
For the second part of the claim, we have
\[
	\frac{\partial}{\partial v_n} W_{v_0,\dots,v_N}(z) = \frac{\partial}{\partial u_n} W_{u_0,\dots,u_N}(z) ,
\]
which is immediate for $n\leq k$ and follows 
from Corollary~\ref{C:zerodrivative} for $n>k$.
\end{proof}

We are now ready to state the main result of this paper: a lower bound on the minimax risk associated with $\finstances$.
\begin{theorem}\label{T:lowerbound}
Let $L, R>0$ and $N\in \mathbb{N}$,
then for any first-order method $\algorithm$ that performs at most $N$ calls to its first-order oracle
and any $d\geq N+1$
there exists 
% a problem instance $(\oraclea, x_0)$, where $\oraclea$ is a first-order oracle for 
a convex function
$w_\algorithm \in C^{1,1}_L(\real^d)$ 
and
$x_0\in \real^d$ 
% is an reference point given to the method,
such that
\begin{equation}\label{E:lowerboundfn}
\begin{aligned}
    & \|x_*-x_0\|\leq R, \quad \text{ for some $x_*\in X_*(w_A)$}, \\
	\text{and} \quad & w_\algorithm(\algorithm(\oraclea, x_0))-w_\algorithm^*\geq  \frac{LR^2}{2\theta_N^2},
\end{aligned}
\end{equation}
where $\theta_N$ is as defined in \eqref{E:thetadef}.
\end{theorem}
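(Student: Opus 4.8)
The plan is to run a \emph{resisting-oracle} argument in which the hard instance is built adaptively, exploiting the rotational structure of $\sW$. We take $x_0=0$. We cannot fix a single function in advance — a first-order method could be tailored to it — so instead we construct an orthonormal set $\{v_0,\dots,v_N\}\subset\real^d$ on the fly, as $A$ issues its queries, and only at the end commit to $w_A:=W_{v_0,\dots,v_N}$. By Corollary~\ref{C:vwintarpolating} the function $\sW$ is convex and in $C^{1,1}_L$, and since $z\mapsto(\langle z,v_0\rangle,\dots,\langle z,v_N\rangle)$ is linear with operator norm one, $w_A$ is automatically convex and in $C^{1,1}_L(\real^d)$; moreover $w_A^*=\sW^*=\sff{N+1}=0$, because that map is onto $\real^{N+1}$ (this is where $d\geq N+1$ enters) and $\sx{N+1}$ minimizes $\sW$.

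The construction keeps a partial orthonormal list and processes the queries $z_1,z_2,\dots$ of $A$ one at a time: before answering the $j$-th query we append a new unit vector $v_{j-1}$, equal to the component of $z_j$ orthogonal to $\linspan\{v_0,\dots,v_{j-2}\}$, normalised to unit length, when that component is nonzero, and an arbitrary unit vector orthogonal to $\linspan\{v_0,\dots,v_{j-2}\}$ otherwise. In both cases $z_j\in\linspan\{v_0,\dots,v_{j-1}\}$ and $\langle z_j,v_{j-1}\rangle\geq 0$, so Lemma~\ref{L:funcinvariance} (with $k=j-1$) shows that the value and the gradient of $W_{v_0,\dots,v_N}$ at $z_j$ are the same for \emph{every} completion of $\{v_0,\dots,v_{j-1}\}$ to a full orthonormal set; the oracle returns this common value, and, by the same lemma, that value is unaffected by the vectors appended later, so all answers are consistent with the eventual $w_A$. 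After $A$ halts — it makes at most $N$ queries, so at most $v_0,\dots,v_{N-1}$ are committed — it outputs $\bar z:=A(\oracle_{w_A},0)$; since $\bar z$ depends only on $x_0$ and the oracle's answers, hence not on any not-yet-committed vector, we may append one more unit vector $v_p$ (with $p$ the number of queries, so $p\leq N$) by the same rule applied to $\bar z$, and then, if fewer than $N+1$ vectors are committed, complete the list to a full orthonormal set $\{v_0,\dots,v_N\}$, which is possible since $d\geq N+1$. We set $w_A:=W_{v_0,\dots,v_N}$.

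It remains to check the two estimates in~\eqref{E:lowerboundfn}. Put $x_*:=\sum_{j=0}^N\langle\sx{N+1},\ee{j}\rangle\,v_j$; then $w_A(x_*)=\sW(\sx{N+1})=0=w_A^*$ by Corollary~\ref{C:vwintarpolating}, so $x_*\in X_*(w_A)$, while $\|x_*-x_0\|=\|\sx{N+1}\|=R$ by Lemma~\ref{L:xstardist}. For the inaccuracy, the appending rule leaves $\bar z\in\linspan\{v_0,\dots,v_p\}$ with $\langle\bar z,v_p\rangle\geq 0$ and $p\leq N$; hence, writing $\bar y:=(\langle\bar z,v_0\rangle,\dots,\langle\bar z,v_N\rangle)$, we have $\bar y_p\geq 0$, and Corollary~\ref{C:minvalue} gives $w_A(\bar z)-w_A^*=\sW(\bar y)-\sW^*\geq\sff{p}$. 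Since $\sz$ is strictly decreasing, $\sff{p}\geq\sff{N}=\frac{LR^2}{2\theta_N^2}$ by Lemma~\ref{L:xstardist} (with $\theta_N$ as in~\eqref{E:thetadef}), which is the claimed bound.

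The delicate point is the consistency of the resisting oracle against an \emph{arbitrary} first-order method: we must guarantee that the answers produced during the construction really equal $w_A$ and $\nabla w_A$ for the function we finally commit to, even though $w_A$ is pinned down only at the end. This is exactly where the kernel $C=\real^{N+1}_+$ in Definition~\ref{D:generalfamily} — and the properties it buys (Theorem~\ref{T:basicprop}, Corollaries~\ref{C:zerodrivative} and~\ref{C:minvalue}, Lemma~\ref{L:funcinvariance}) — is needed: it forces $\nabla\sW(y)$ to be supported on at most one coordinate beyond the support of $y$, and, crucially, to vanish on that coordinate whenever the last coordinate in the support of $y$ is nonnegative, which is precisely the configuration our appending rule maintains. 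A second, minor, point is that $A$'s output $\bar z$ need not be one of its query points; this is handled by treating $\bar z$ as a final query for the purpose of appending $v_p$, which is legitimate because $\bar z$ is a function of $x_0$ and the oracle answers alone and therefore does not constrain the vector appended on its account.
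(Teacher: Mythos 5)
Your proposal is correct and follows essentially the same resisting-oracle argument as the paper: commit one new orthonormal direction per query, chosen so that the query lies in the current span with nonnegative coefficient on the newest direction, use Lemma~\ref{L:funcinvariance} to keep the oracle answers consistent with every completion of the basis, and conclude with Corollary~\ref{C:minvalue} and Lemma~\ref{L:xstardist}. The only (harmless) cosmetic difference is that you bound the final inaccuracy by $f^{\zeta^*}_{p}$ for the actual number of queries $p\leq N$ and then invoke monotonicity of $\zeta^*$, whereas the paper applies Corollary~\ref{C:minvalue} directly at index $N$.
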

\begin{proof}
Since a first-order method can only gain information on the objective through 
the first-order information at the selected search points,
it cannot distinguish between functions that have identical
first-order information at these points,
and will execute precisely in the same way on such functions.
We can therefore maintain during the run of the method
a set of functions that have identical first-order information at
the points selected so far,
postponing the choice of a specific objective until the method has completed its run.

% At each iteration of the algorithm, we choose a subset of these functions that share the same first-order information at the new search point
% and leads to a worst-case behavior of the algorithm.

% At each iteration we select a subset of these functions that again have the same first-order information on the currently 
% inspected point.
% By properly selecting theses subsets we are left with a set of functions that achieve the worst-case guarantees.

% The set $\Omega$ will act as the set of candidates for the objective, 
% where 
% each time the algorithm invokes the oracle at a new search point we 
% select among the candidates a subset that 
% share the same first-order information at the new search point
% and perform in the worst possible manner.

% \emph{Iteration $\#k$.}
We set $x_0=0$ as the reference point given to algorithm $A$.
Now, suppose $\ax_k\in\real^d$ is the $(k+1)$th search point chosen by algorithm $\algorithm$, $0\leq k\leq N-1$. At this stage, we
take a vector $\bar v_k\in \real^d$ that satisfies the following conditions:
\begin{itemize}
\item $\bar v_k$ is a unit vector orthogonal to $\{\bar v_0,\dots,\bar v_{k-1}\}$,
\item $\ax_k \in \linspan\{\bar v_0,\dots,\bar v_k\}$,
\item $\langle \ax_k, \bar v_k\rangle \geq 0$,
\end{itemize}
and set 
\[
	\Omega_k:=\{ W_{v_0,\dots,v_N} : \text{$v_0,\dots,v_N$ are orthonormal, } v_0 = \bar v_0,\dots,v_k=\bar v_k \}.
\]
% Note that the existence of $\bar v_k$ follows from $d\geq N+1\geq k+1$
% using the Gram-Schmidt process on the set $\{\bar v_0,\dots,\bar v_{k-1},\ax_k\}$.
By Lemma~\ref{L:funcinvariance}, all functions in $\Omega_k$ share the same
first-order information at $\ax_k$ (and also at $\ax_0,\dots,\ax_{k-1}$, since $\Omega_k\subset \Omega_{k-1}\subset \dots \subset \Omega_0$),
consequently, algorithm $A$ cannot differentiate between these functions, and 
we can postpone the specific choice of function for a later stage.

% \emph{Output.}
After invoking the oracle $N$ times,
suppose algorithm $\algorithm$ has selected $\ax_N\in\real^d$ as its approximate solution.
Let $\bar v_N$ be a unit vector orthogonal to $\{\bar v_0,\dots,\bar v_{N-1}\}$
such that $\langle \ax_N, \bar v_N\rangle\geq 0$,
and set % select the following function as the objective provided to the method:
\[
	w_\algorithm:= W_{\bar v_0,\dots,\bar v_N},
\]
then by the construction above,
algorithm $\algorithm$, when applied on $w_\algorithm$ with the reference point $x_0$,
will generate the 
search points $\ax_0,\dots,\ax_{N-1}$
and it return as its output the vector $\ax_N$.

To complete the proof, note that
 $w_\algorithm$ is of the form $\sW(Qy)$ for some matrix $Q\in\real^{N+1 \times d}$ with orthonormal rows,
hence $w_\algorithm$ shares the convexity, smoothness, and Lipschitz constant of $\sW$.
% (see Corollary~\ref{C:vwintarpolating}),
In addition, since $\sx{N+1}$ is a minimizer of $\sW$, we get that $Q^T\sx{N+1}\in X_*(w_\algorithm)$.
Finally, from Lemma~\ref{L:xstardist} and Corollary~\ref{C:minvalue} we have
\begin{align*}
    & \|Q^T \sx{N+1}- x_0\|=\|\sx{N+1}\|= R, \\
    \text{and }\quad & w_\algorithm(\ax_N) - w_\algorithm^*= w_\algorithm(\ax_N)\geq \sff{N} = \frac{L R^2}{2\theta_N^2},
\end{align*}
hence $w_\algorithm$ satisfies the required properties.
\end{proof}

Combining the previous theorem and the worst-case performance of the Optimized Gradient Method~\eqref{E:upperboundonrisk}, we obtain
the exact minimax risk associated with the minimization of smooth and convex functions:
\begin{corollary}
Let $L>0$, $R>0$,  and $N\in \mathbb{N}$, then for any $d\geq N+1$,
\[
    \risk{\finstances}{N} =  \frac{LR^2}{2\theta_N^2},
\]
where $\theta_N$ is as defined in \eqref{E:thetadef}.
\end{corollary}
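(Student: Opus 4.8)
The plan is to obtain the identity by sandwiching $\risk{\finstances}{N}$ between a matching upper and lower bound. The upper bound is already in hand: inequality~\eqref{E:upperboundonrisk} states that $\risk{\finstances}{N}\le \frac{LR^2}{2\theta_N^2}$, because the Optimized Gradient Method belongs to $\mathcal{A}_N$ (it queries the oracle at most $N$ times) and its output $x_N$ satisfies $f(x_N)-f^*\le \frac{LR^2}{2\theta_N^2}$ on every instance of $\finstances$, so the infimum defining the minimax risk cannot exceed this value; note this part requires no restriction on $d$. It therefore remains to establish the reverse inequality $\risk{\finstances}{N}\ge \frac{LR^2}{2\theta_N^2}$ for $d\ge N+1$, and this is precisely what Theorem~\ref{T:lowerbound} was built to provide.

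To this end I would fix an arbitrary first-order method $\algorithm\in\mathcal{A}_N$ and apply Theorem~\ref{T:lowerbound} with the prescribed constants $L,R>0$, the given $N$, and the given $d\ge N+1$. The theorem supplies a convex function $w_\algorithm\in C^{1,1}_L(\real^d)$ together with a reference point $x_0\in\real^d$ such that $\|x_*-x_0\|\le R$ for some $x_*\in X_*(w_\algorithm)$ and
\[
   w_\algorithm\bigl(\algorithm(\oraclea,x_0)\bigr)-w_\algorithm^*\ \ge\ \frac{LR^2}{2\theta_N^2}.
\]
The one point deserving attention is that the pair $(\oraclea,x_0)$ is a genuine element of $\finstances$: by the conclusion of Theorem~\ref{T:lowerbound}, $w_\algorithm$ is convex and lies in $C^{1,1}_L(\real^d)$, and it has a nonempty minimizer set with an element within distance $R$ of $x_0$, which are exactly the defining conditions of $\finstances$. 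Hence
\[
   \varepsilon(\algorithm;\finstances)\ =\ \sup_{(\oracle_f,x_0)\in\finstances}\bigl(f(\algorithm(\oracle_f,x_0))-f^*\bigr)\ \ge\ w_\algorithm\bigl(\algorithm(\oraclea,x_0)\bigr)-w_\algorithm^*\ \ge\ \frac{LR^2}{2\theta_N^2}.
\]

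Since $\algorithm\in\mathcal{A}_N$ was arbitrary, taking the infimum over all such methods gives $\risk{\finstances}{N}=\inf_{\algorithm\in\mathcal{A}_N}\varepsilon(\algorithm;\finstances)\ge \frac{LR^2}{2\theta_N^2}$, and combining this with~\eqref{E:upperboundonrisk} yields the asserted equality for every $d\ge N+1$. I do not expect a genuine obstacle in this argument, since all the substantive content has already been absorbed into Theorem~\ref{T:lowerbound} and into the worst-case analysis of the Optimized Gradient Method; the only step requiring any care is the verification just mentioned, namely that the worst-case instance produced by Theorem~\ref{T:lowerbound} actually satisfies the membership requirements of the class $\finstances$ on the prescribed space $\real^d$.
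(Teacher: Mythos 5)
Your proposal is correct and is essentially the paper's own argument: the corollary is obtained by sandwiching the minimax risk between the Optimized Gradient Method's worst-case guarantee \eqref{E:upperboundonrisk} and the lower bound of Theorem~\ref{T:lowerbound}. The only elaboration you add beyond the paper's one-line justification is the (valid and worthwhile) check that the instance $(\oraclea,x_0)$ produced by Theorem~\ref{T:lowerbound} indeed belongs to $\finstances$.
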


\begin{remark}\label{R:nonsmoothex}
Note that the proof of Theorem~\ref{T:lowerbound} can be readily applied on any function satisfying 
Corollaries~\ref{C:minvalue} and~\ref{C:zerodrivative} (with the obvious adjustments).
One important such function
is the function $\bar f_N$ defined in \cite[Appendix~A]{drori2014optimal}
for establishing the minimax risk for the non-smooth case: since this function clearly satisfies the required properties,
it then follows by Theorem~\ref{T:lowerbound} that the identity~\eqref{E:nonsmoothbound} holds for $d\geq N+1$ 
(compared to $d\geq 2N+1$ which follows by the current proof).
\end{remark}

\section{Discussion an future work}\label{S:concluding}
We presented a novel interpolation scheme for constructing smooth and convex functions
% that generalizes the results of~\cite{Taylor2016},
and used it to define a function that is in some sense the worst possible for first-order methods.
Taking advantage of the special properties of the resulting function,
we established the exact minimax risk associated with the class of smooth and convex minimization problems.

A similar approach might be applicable for establishing  bounds on additional problem classes,
such as classes of problems where the approximation accuracy is measured by an alternative criteria (e.g., $\|\nabla f(x_N)\|$)
and cases where the relation between $x_0$ and $x_*$ is determined in an alternative way (e.g., when $\|x_0-x_*\|_p\leq R$ for some $p\neq 2$).
We leave the investigation of these problems for future work.

We conclude with the observation that, as in the cases of non-smooth minimization and convex quadratic minimization, 
an optimal method for smooth minimization might not be unique.
Finding optimal methods with additional properties might prove to be an interesting research direction.

\bibliographystyle{abbrv}
\bibliography{bib}

\end{document}